\newcommand\g{{\mathfrak g}}
\newcommand\h{{\mathfrak h}}
\renewcommand{\b}{\mathfrak{b}}
\newcommand\m{\mathfrak m}
\renewcommand\l{\mathfrak l}
\newcommand\n{\mathfrak n}
\renewcommand\t{\mathfrak t}
\newcommand\z{\mathfrak z}
\newcommand\vf{\mathfrak v}
\newcommand\q{\mathfrak q}
\renewcommand\v{\mathfrak v}
\newcommand\FF{\mathcal F}
\newcommand\GF{\mathcal G}
\newcommand\KF{\mathcal K}
\newcommand\Wh{\operatorname{Wh}}
\newcommand\y{\mathfrak y}
\newcommand\Sk{\mathcal{S}}
\newcommand\Verm{\mathcal{M}}
\newcommand\F{\operatorname{F}}
\newcommand\W{{\bf A}}
\newcommand\K{\mathbb K}
\newcommand\U{\mathcal U}
\newcommand\X{\mathfrak X}
\newcommand\Ann{\operatorname{Ann}}
\newcommand\Mod{\operatorname{Mod}}
\newcommand\Walg{\mathcal W}
\newcommand\Z{\mathbb Z}
\newcommand\A{\mathcal A}
\newcommand\gr{\operatorname{gr}}
\newcommand\I{\mathcal I}
\newcommand\J{\mathcal J}
\renewcommand\sl{\mathfrak{sl}}
\newcommand\Span{\operatorname{Span}}
\newcommand\Hom{\operatorname{Hom}}
\newcommand{\ad}{\mathop{\rm ad}\nolimits}
\newcommand\Centr{\mathcal Z}
\newcommand{\Ocat}{\mathcal{O}}
\newtheorem{Thm}{Theorem}[section]
\newtheorem{Prop}[Thm]{Proposition}
\newtheorem{Cor}[Thm]{Corollary}
\newtheorem{Lem}[Thm]{Lemma}
\theoremstyle{definition}
\newtheorem{Rem}[Thm]{Remark}
\numberwithin{equation}{section}
\numberwithin{table}{section} \oddsidemargin=0cm
\author{Ivan Losev}
\title{On the structure of the category $\Ocat$ for W-algebras}
\thanks{{\it Key words and phrases}: W-algebras, nilpotent elements, category O,
generalized Whittaker modules, multiplicities}
\thanks{{\it 2000 Mathematics Subject Classification.} 17B35, 53D55}
\begin{document}
\begin{abstract}
A W-algebra (of finite type) $\Walg$ is a certain associative algebra
associated with a semisimple Lie algebra, say $\g$, and its
nilpotent element, say $e$. The goal of this paper is to study the
category $\Ocat$ for $\Walg$  introduced by Brundan, Goodwin and
Kleshchev. We establish an equivalence of this category with certain
category of $\g$-modules. In the case when $e$ is of principal Levi
type (this is always so when $\g$ is of type A) the category of
$\g$-modules in interest is the category of generalized Whittaker
modules introduced by McDowell, and studied by Milicic-Soergel
and Backelin. 
\end{abstract}
\maketitle
\section{Introduction}
Let $\g$ be a semisimple Lie algebra over an algebraically closed
field $\K$ of characteristic zero. Choose a nilpotent element
$e\in\g$. Associated to the pair $(\g,e)$ is a certain associative
algebra $\Walg$, which is closely related to the universal
enveloping algebra $U(\g)$. It was studied extensively during the
last decade starting from  Premet's paper \cite{Premet1}, see also
\cite{BGK},\cite{BK1},\cite{BK2},\cite{GG},\cite{Ginzburg},\cite{Wquant},\cite{HC},
\cite{Premet2}-\cite{Premet4}. Definitions of
a W-algebra due to Premet, \cite{Premet1}, and the author, \cite{Wquant},
are recalled in Section \ref{SECTION_Walg}.

In the representation theory of $U(\g)$ a crucial role is played by
the Bernstein-Gelfand-Gelfand category $\Ocat$ of $U(\g)$-modules.
In particular, all finite dimensional $U(\g)$-modules and all Verma
modules belong to $\Ocat$. In \cite{BGK} Brundan, Goodwin and
Kleshchev introduced the notion of the category $\Ocat$ for $\Walg$.
This category also contains all finite dimensional $\Walg$-modules
as well as analogs of Verma modules. See Section \ref{SECTION_Ocat}
for definitions.

The BGK category $\Ocat$ is not always very useful. For example, for
a {\it distinguished} nilpotent element $e\in\g$ (i.e., such that
the centralizer $\z_\g(e)$ contains no nonzero semisimple elements) $\Ocat$ consists precisely
of finite dimensional modules. The other extreme
is the case when $e$ is of principal Levi type. This means that
there is a Levi subalgebra $\l\subset\g$ such that $e$ is a
principal nilpotent element in $\l$. Here the BGK category $\Ocat$
looks quite similar to the BGG one.

In \cite{BGK}, Conjecture 5.3, the authors conjectured that for $e$
of principal Levi type there  exists  a category equivalence between
their category $\Ocat$ and a certain category of {\it generalized
Whittaker modules} introduced by McDowell, \cite{McD}, and studied by Milicic and Soergel, \cite{MS}, and
Backelin, \cite{Backelin}. We postpone the description of this
category until Section \ref{SECTION_Milicic_Soergel}. The main
result of this paper, Theorem \ref{Thm_main}, gives the proof of
that conjecture.

Let us describe the content of this paper. In Section
\ref{SECTION_Walg} we recall the definition of W-algebras and
the basic theorem of our paper \cite{Wquant},
the so called decomposition theorem. In Section \ref{SECTION_Ocat} the notion of the
category $\Ocat$ for a W-algebra is recalled. In Section
\ref{SECTION_Milicic_Soergel} we introduce the category of
generalized Whittaker modules. Special cases of this category are, firstly,
Skryabin's category of Whittaker modules (or, more precisely, the full subcategory
there consisting of all finitely generated modules) and, secondly, the categories studied in
\cite{McD},\cite{MS},\cite{Backelin}. Then we state the category equivalence
theorem \ref{Thm_main}  generalizing the Skryabin equivalence theorem
from \cite{Premet1} and proving the conjecture of Brundan, Goodwin
and Kleshchev. The proof of Theorem \ref{Thm_main} is given in
Section \ref{SECTION_proof}. Essentially, we generalize the proof
of the Skryabin equivalence
theorem given in \cite{Wquant}, Subsection 3.3, checking that
certain topological algebras are isomorphic.

Finally, in Section \ref{SECTION_Application} we will describe some applications of
our results.

{\bf Acknowledgements.} I am grateful to Alexander Kleshchev, who
brought this problem to my attention. I also thank Jonathan Brundan for explaining the
application of my results to the classification of representations of Yangians. Finally, I thank the referee
for useful comments on previous versions of this paper that helped to improve the exposition.

\section{W-algebras}\label{SECTION_Walg}
Throughout the paper everything is defined over an algebraically
closed field $\K$ of characteristic 0.

Let $G$ be a  reductive algebraic group, $\g$ its Lie algebra, and
$\U$ the universal enveloping algebra of $\g$. Fix a nonzero
nilpotent element $e\in \g$. Choose an $\sl_2$-triple $(e,h,f)$ in
$\g$ and set $Q:=Z_G(e,h,f)$. Denote by $T$ a maximal torus of $Q$.
Also introduce a grading on $\g$ by eigenvalues of $\ad h$:
$\g:=\bigoplus \g(i), \g(i):=\{\xi\in\g| [h,\xi]=i\xi\}$. Consider
the one-parameter subgroup $\gamma:\K^\times\rightarrow G$ with
$\frac{d}{dt}|_{t=0}\gamma=h$. Choose a $G$-invariant symmetric form $(\cdot,\cdot)$
on $\g$, whose restriction to any algebraic reductive subalgebra is non-degenerate.
This form allows to identify $\g\cong\g^*$. Let $\chi=(e,\cdot)$ be the
element of $\g^*$ corresponding  to $e$.

Equip the space $\g(-1)$ with a symplectic form $\omega_\chi$ as
follows: $\omega_\chi(\xi,\eta)=\langle\chi,[\xi,\eta]\rangle$. Fix
a lagrangian subspace $l\subset \g(-1)$ and define the subalgebra
$\m:=l\oplus \bigoplus_{i\leqslant -2}\g(i)\subset \g$. According to
Premet, \cite{Premet1}, a W-algebra $\Walg$ associated with $e$
is, by definition, $(\U/\U\m_\chi)^{\ad \m}$, where
$\m_\chi:=\{\xi-\langle\chi,\xi\rangle,\xi\in\m\}$. As Gan and
Ginzburg checked in \cite{GG}, $\Walg$ does not depend on the choice
of $l$ up to some natural isomorphism. Thus we can choose  a
$T$-stable lagrangian subspace  $l\subset\g(-1)$ so we get an action of
$T$ on $\Walg$. Note that the image of $\t$ in $\U/\U\m_\chi$
consists of $\ad\m$-invariants, for $\m$ is $\t$-stable and $\chi$ is annihilated by $\t$. So we get an embedding
$\t\hookrightarrow \Walg$.  It is compatible with the action of $T$ in the sense
that the differential of the $T$-action coincides with the adjoint
action of $\t\subset \Walg$. In fact, from the construction in
\cite{GG} it follows that $Q$ acts on $\Walg$ by algebra
automorphisms, see \cite{Premet2}, Subsection 2.2, for details.

One important feature of $\Walg$ is that the category $\Walg\text{-}\Mod$ of (left)
$\Walg$-modules is equivalent to a certain full subcategory in
$\U$-$\Mod$ to be described now. We say that a left $\U$-module $M$ is
a {\it Whittaker} module if $\m_\chi$ acts on $M$ by locally
nilpotent endomorphisms. In this case $M^{\m_\chi}=\{m\in M| \xi
m=\langle\chi,\xi\rangle m, \forall \xi\in\m\}$ is a  $\Walg$-module. As
Skryabin proved in the appendix to \cite{Premet1}, the functor
$M\mapsto M^{\m_\chi}$ is an equivalence between the category of
Whittaker $\U$-modules and $\Walg$-$\Mod$. A quasiinverse equivalence
is given by $N\mapsto \Sk(N):=(\U/\U\m_\chi)\otimes_\Walg N$,
where $\U/\U\m_\chi$ is equipped with a natural structure of a
$\U$-$\Walg$-bimodule.

Note also that the center of $\Walg$ can be identified with the
center $\Centr$ of $\U$, as follows. It is clear that $\Centr\subset
\U^{\ad \m}$ whence we have a homomorphism $\Centr\rightarrow
\Walg$. This homomorphism is injective and its image coincides with
the center of $\Walg$, see \cite{Premet2}, the footnote to the Question 5.1.

An alternative description of $\Walg$ was given in \cite{Wquant}.
Define the Slodowy slice $S:=e+\z_\g(f)$. It will be convenient for
us to consider $S$ as a subvariety in $\g^*$.  Define the {\it
Kazhdan} action of $\K^\times$ on $\g^*$ by
$t.\alpha=t^{-2}\gamma(t)\alpha$. This action preserves $S$ and, moreover,
$\lim_{t\rightarrow \infty}t.s=\chi$ for all $s\in S$. Also note
that $Q$ acts on $S$ in a natural way.

Set $V:=[\g,f]$. Equip $V$ with the symplectic form
$\omega(\xi,\eta)=\langle\chi,[\xi,\eta]\rangle$, the action of
$\K^\times, t.v=\gamma(t)^{-1}v,$ and the natural action of $Q$.

Now let $Y$ be a smooth affine  variety  equipped with
commuting actions of a reductive group $Q$ and of the one-dimensional torus
$\K^\times$. For instance, one can take $Y=\g^*,S,V^*$ equipped with
the natural actions of $Q=Z_G(e,h,f)$ and the Kazhdan actions of
$\K^\times$. Note that the grading on $\K[S]$ induced by the Kazhdan
action is positive.

As follows from the explanation in \cite{HC}, Subsection 2.1, for
$Y=\g^*,V^*,S$ there are certain {\it star-products}
$*:\K[Y]\otimes\K[Y]\rightarrow \K[Y][\hbar], f*g=\sum_{i=0}^\infty
D_i(f,g)\hbar^{2i},$ satisfying the following conditions.

\begin{enumerate}
\item $*$ is associative, that is, a natural extension of $*$ to
$\K[Y][\hbar]$ turns $\K[Y][\hbar]$ into an associative $\K[\hbar]$-algebra, and
$1$ is a unit for this product.
\item $D_0(f,g)=fg$ for all $f,g\in
\K[Y]$.
\item $D_i(\cdot,\cdot)$ is a bidifferential operator of order at
most $i$ in each variable.
\item $*$ is a $Q$-equivariant map $\K[Y]\otimes \K[Y]\rightarrow
\K[Y][\hbar]$.
\item $*$ is homogeneous with respect to $\K^\times$. This, by definition, means
that the degree of $D_i$ is $-2i$ for all $i$.
\item There is a $Q$-equivariant map $\q\rightarrow \K[Y][\hbar],\xi\mapsto \widehat{H}_\xi,$ such that $\hbar^{-2}[\widehat{H}_\xi,
\bullet]$ coincides with the image of $\xi$ under the  differential of the $Q$-action on
$\K[Y][\hbar]$.
\end{enumerate}

This construction allows one to equip $\K[\g^*],\K[V^*],\K[S]$ with
new associative products $*_1$ defined by $f*_1g=\sum_{i=0}^\infty
D_i(f,g)$. The algebras $\K[\g^*],\K[V^*],\K[S]$ with these new
products are $T$ (and, in fact, $Q$)-equivariantly isomorphic to
$\U$, the Weyl algebra $\W_V$ of the vector space $V$, and the W-algebra $\Walg$,
respectively.


We finish this section recalling a decomposition result from
\cite{Wquant},  which plays a crucial role in our
construction.

Recall that if $X$ is an affine algebraic variety and $x$ a point of $X$ we can consider
the completion $\K[X]^\wedge_{x}:=\varprojlim_k \K[X]/\K[X]\m_x^k$, where $\m_x$ denotes the maximal
ideal corresponding to $x$. If $X$ is an affine space, then taking $x$ for the origin
and choosing a basis in $X$, we can identify
$\K[X]^\wedge_x$ with the algebra of formal power series. The algebra $\K[X]^\wedge_x$ is equipped with the
topology of the inverse image. If  $D:\K[X]\otimes \K[X]\rightarrow \K[X]$ is a bidifferential operator,
then it can be uniquely extended to a continuous bidifferential operator $\K[X]^\wedge_x\otimes \K[X]^\wedge_x\rightarrow \K[X]^\wedge_x$.

Since our star-products satisfy (3),  we can extend
them to the completions $\K[\g^*]^\wedge_\chi,
\K[V^*]^\wedge_0,$ $\K[S]^\wedge_\chi$. So we get new algebra structures on
$\K[\g^*]^\wedge_{\chi}[[\hbar]], \K[V^*]^\wedge_0[[\hbar]],\K[S]^\wedge_\chi[[\hbar]]$.
These algebras have unique maximal ideals, for instance, the maximal ideal  $\widetilde{\m}\subset\K[\g^*]^\wedge_\chi[[\hbar]]$
is the inverse image of the maximal ideal in $\K[\g^*]^\wedge_\chi$. The algebra $\K[\g^*]^\wedge_\chi$
is complete in the $\widetilde{\m}$-adic topology. The similar claims hold for the other two algebras.

Consider the algebra $\K[S]^\wedge_\chi[[\hbar]]\otimes_{\K[[\hbar]]}\K[V^*]^\wedge_0[[\hbar]]$
and let $\widetilde{\m}$ denote its maximal ideal corresponding to  the point $(\chi,0)$. Note that
the algebra is not complete in the $\widetilde{\m}$-adic topology. Taking the completion, we get
the {\it completed tensor product}, which we denote by $\K[S]^\wedge_\chi[[\hbar]]\widehat{\otimes}_{\K[[\hbar]]}\K[V^*]^\wedge_0[[\hbar]]$. As a vector space,
the last algebra is just $\K[S\times V^*]^\wedge_{(\chi,0)}[[\hbar]]$.

Finally, note that there is a natural
identification $\varphi: \z_\g(e)\oplus V\rightarrow \g, (\xi,\eta)\mapsto \xi+\eta$.

The first two assertions of the following Proposition follow from \cite{Wquant}, Theorem 3.1.3,
and the third follows from \cite{HC}, Theorem 2.3.1, for semisimple $G$ and from Remark 2.3.2
for a general reductive group $G$.

\begin{Prop}\label{Prop:1}
There is a $Q\times \K^\times$-equivariant isomorphism
$$\Phi_\hbar:\K[\g^*]^\wedge_\chi[[\hbar]]\rightarrow
\K[S]^\wedge_\chi[[\hbar]]\widehat{\otimes}_{\K[[\hbar]]}\K[V^*]^\wedge_0[[\hbar]]$$
of topological $\K[[\hbar]]$-algebras satisfying the following
conditions:
\begin{enumerate}
\item $\Phi_\hbar(\sum_{i=0}^\infty f_i\hbar^{2i})$ contains only
even powers of $\hbar$.
\item The map between cotangent spaces $d_0(\Phi_\hbar)^*:\z_\g(e)\oplus V\rightarrow\g$
induced by $\Phi_\hbar$ coincides with $\varphi$.
\item Let $\iota_1,\iota_2$ denote the embeddings of $\q$ into $\K[\g^*]^\wedge_\chi[[\hbar]],
\K[S]^\wedge_\chi[[\hbar]]\widehat{\otimes}_{\K[[\hbar]]}\K[V^*]^\wedge_0[[\hbar]]$. Then $\Phi\circ\iota_1=\iota_2$.
\end{enumerate}
\end{Prop}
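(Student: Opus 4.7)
The plan is to reduce the statement to an equivariant $\hbar$-adic lifting of the classical slice theorem, with the three numbered conditions pinned down in sequence.

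First, I construct the classical isomorphism. By $\sl_2$-theory applied to $(e,h,f)$, we have the $Q\times\K^\times$-equivariant linear isomorphism $\varphi\colon\z_\g(e)\oplus V\to\g$ with $V=[\g,f]$. Dualising via $(\cdot,\cdot)$ identifies $\z_\g(e)^*\cong\z_\g(f)$ and $V^*\cong V$, and since $S=\chi+\z_\g(f)$, the addition map gives a $Q\times\K^\times$-equivariant morphism $S\times V^*\to\g^*$ which is a formal diffeomorphism at $(\chi,0)\mapsto\chi$. Completing yields a $Q\times\K^\times$-equivariant isomorphism of commutative completed algebras
$$\Phi_0\colon\K[\g^*]^\wedge_\chi\longrightarrow\K[S]^\wedge_\chi\,\widehat{\otimes}\,\K[V^*]^\wedge_0,$$
whose induced cotangent map equals $\varphi$, which takes care of property (2) at the classical level.

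Second, I lift $\Phi_0$ to an algebra isomorphism of the star-product algebras, building it inductively modulo powers of $\hbar^2$. Both completed algebras are Fedosov-type deformation quantisations of the same formal symplectic germ at $\chi$, so at each step the obstruction to extending $\Phi_\hbar$ mod $\hbar^{2k+2}$ to mod $\hbar^{2k+4}$ lies in a Hochschild $2$-cohomology group of Kazhdan degree $-2k$. Positivity of the Kazhdan grading on $\K[S]$ and the finite-dimensionality of each graded piece, combined with averaging over the reductive group $Q$, allow such obstructions to be killed within the $Q$-equivariant subcomplex. Restricting throughout to even $\hbar$-cochains is legitimate because each $D_i$ carries Kazhdan weight $-2i$ and only even $i$ appear; this secures property (1).

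Third, I enforce the moment-map compatibility (3). Setting $\Delta(\xi):=\Phi_\hbar(\iota_1(\xi))-\iota_2(\xi)$, property (6) of the star-products implies $\hbar^{-2}[\Delta(\xi),\cdot\,]$ is the zero derivation, so each $\Delta(\xi)$ is central in the target algebra, hence lies in $\K[[\hbar]]$. Equivariance forces $\Delta$ to vanish on $[\q,\q]$; on the centre of $\q$ the discrepancy is a scalar-valued shift absorbed by the standard normalisation freedom of the quantum moment map. Condition (2) guarantees $\Delta\equiv 0\pmod{\hbar^2}$, so this correction sits in positive Kazhdan degree and does not disturb (1) or (2).

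The hardest point is reconciling (1), (2), (3) simultaneously rather than any one in isolation: (2) pins down the classical part, (1) restricts the inductive step to the even Hochschild subcomplex, and (3) is a centrality condition at each order. The proof must therefore carry out the lift in the $Q\times\K^\times$-equivariant even Hochschild complex with values modulo the centre, where reductivity of $Q$, positivity of the Kazhdan grading on $\K[S]$, and the Hamiltonian nature of the $Q$-action cooperate to kill the obstructions and leave only the controlled ambiguity that matches the two quantum moment maps.
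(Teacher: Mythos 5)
Your plan does not match how this statement is actually established: in the paper Proposition \ref{Prop:1} is not proved from scratch at all, but imported, the first two assertions from \cite{Wquant}, Theorem 3.1.3 (the decomposition theorem), and the third from \cite{HC}, Theorem 2.3.1 and Remark 2.3.2. Measured against that (substantial) underlying argument, your sketch has genuine gaps. First, your ``classical'' step is wrong as stated: the addition map $S\times V^*\to\g^*$ is a formal isomorphism of varieties but it is \emph{not} a Poisson morphism, and no Poisson isomorphism is exhibited; yet the mod-$\hbar^2$ part of any isomorphism of the star-product algebras must be Poisson, since first-order commutators recover the brackets. Moreover the germ of $\g^*$ at $\chi$ is a Poisson, not a symplectic, germ, so the Fedosov/Hochschild framework you invoke (where $HH^2$ is controlled by de Rham cohomology of a formal polydisc) does not apply to $\K[\g^*]^\wedge_\chi[[\hbar]]$; and the positivity of the Kazhdan grading, which you use to kill obstructions, holds on $\K[S]$ but fails on $\K[\g^*]$ (coordinates from $\g(i)$ with $i\leqslant -2$ have nonpositive Kazhdan degree), so the stated obstruction-killing mechanism is not available where you need it. The route in \cite{Wquant} is structurally different: one first constructs a $Q\times\K^\times$-equivariant embedding of the completed homogeneous Weyl algebra of $V$ into $\K[\g^*]^\wedge_\chi[[\hbar]]$ (a lifting problem for a finite-dimensional symplectic space, where equivariance and complete reducibility do suffice), and then recovers the slice factor as the centralizer of the image, yielding the completed tensor product decomposition; the equivariant formal Darboux--Weinstein statement is a consequence, not an input.

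Second, your treatment of condition (3) is also flawed. From $\hbar^{-2}[\Delta(\xi),\cdot]=0$ you conclude that $\Delta(\xi)$ ``is central, hence lies in $\K[[\hbar]]$,'' but the center of $\K[S]^\wedge_\chi[[\hbar]]\widehat{\otimes}_{\K[[\hbar]]}\K[V^*]^\wedge_0[[\hbar]]$ is far from $\K[[\hbar]]$: it contains (a completion of) the center of $\Walg$, which is isomorphic to the center of $\U$. And even granting that $\Delta(\xi)$ were a scalar on the center of $\q$, there is no ``normalisation freedom'' left to absorb it: the maps $\iota_1,\iota_2$ are fixed in advance by property (6) of the star-products, and adding a central element to $\Phi_\hbar(\iota_1(\xi))$ cannot be effected by modifying $\Phi_\hbar$ by an inner (or any algebra) automorphism. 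One must actually prove $\Delta=0$, which is precisely the content of \cite{HC}, Theorem 2.3.1 (semisimple case) and Remark 2.3.2 (general reductive case), and is not a formal consequence of (1) and (2).
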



This proposition allows to define a map from the set of two-sided
ideals of $\Walg$ to the analogous set for $\U$. Namely, take a
two-sided ideal $\I\subset \Walg$. As we noted in \cite{Wquant},
Subsection 3.2, there is a unique ideal $\I_\hbar\subset
\K[S][\hbar]$ such that $\I=\I_\hbar/(\hbar-1)$ and $\I_\hbar$ is {\it $\hbar$-saturated}, i.e., $\I_\hbar\cap \hbar
\K[S][\hbar]=\hbar \I_\hbar$. Let $\I^\wedge_\hbar$ denote the closure of
$\I_\hbar$ in $\K[S]^\wedge_\chi[[\hbar]]$. Let $\J_\hbar$ denote
the intersection of
$\Phi_\hbar^{-1}(\I^\wedge_\hbar\widehat{\otimes}_{\K[[\hbar]]}\K[V^*]^\wedge_0[[\hbar]])$
with $\K[\g^*][\hbar]$. Finally, set
$\I^\dagger:=\J_\hbar/(\hbar-1)\subset \U$. By \cite{Wquant},
Proposition 3.4.1 and Theorem 1.2.2(ii),
$\Ann_\U(\Sk(N))=\Ann_\Walg(N)^\dagger$ for any $\Walg$-module
$N$.

\section{Category $\Ocat$ for a W-algebra}\label{SECTION_Ocat}
Recall that we have an embedding $\t\hookrightarrow \Walg$. Also we have a natural embedding
of the cocharacter lattice $\X^*(T):=\Hom(\K^\times,T)$ of $T$ into $\t$. Choose
an  element $\theta\in \X^*(T)\subset \t$. Let $L$
stand for the centralizer of $\theta$ in $G$, this is a Levi
subgroup of $G$. By $\l$ we denote the Lie algebra of $L$, clearly,
$e,h,f\in\l$. Let $T_0$ denote the unit component of $Z(L)\cap T$
and $\t_0$ be the Lie algebra of $T_0$. Note that $\theta\in \t_0$.

The algebra $\Walg$ decomposes into the direct sum of weight spaces
with respect to $\ad\theta$, $\Walg=\sum_{\alpha\in
\Z}\Walg_\alpha$. Set
\begin{equation}\label{eq:2.1}\Walg_{\geqslant
0}:=\bigoplus_{\alpha\geqslant 0}\Walg_\alpha, \Walg_{>
0}:=\bigoplus_{\alpha> 0}\Walg_\alpha, \Walg_{\geqslant
0}^+:=\Walg_{\geqslant 0}\cap \Walg\Walg_{>0}.\end{equation}

It is clear that $\Walg_{\geqslant 0}$ is a subalgebra of $\Walg$,
while $\Walg_{>0},\Walg_{\geqslant 0}^+$ are  two-sided ideals in
$\Walg_{\geqslant 0}$.


Let us make a remark on the choice of generators in the left
$\Walg$-ideal $\Walg\Walg_{>0}$. Now we consider $\Walg$ as
the algebra $\K[S]$ with the modified multiplication. We have an
embedding $\z_\g(e)\hookrightarrow \K[S]$ and $\z_\g(e)$ generates
$\K[S]$ (for both multiplications). It is easy to see that
$\z_\g(e)_{>0}$ generates $\Walg\Walg_{>0}$. Let $f_1,\ldots,f_n$ denote
a homogeneous (w.r.t. $\ad\theta$) basis in $\z_\g(e)_{>0}$.

Set $\Walg^0:=\Walg_{\geqslant 0}/\Walg_{\geqslant 0}^+$.

\begin{Rem}\label{Rem:2.1}
In \cite{BGK} Brundan, Kleshchev and Goodwin constructed an isomorphism
between $\Walg^0$ and the W-algebra $\underline{\Walg}$ constructed for the
pair $(\l,e)$. We will also show that there is an isomorphism between the two algebras
in the course of the proof of Theorem \ref{Thm_main}. See also Remark \ref{Rem_two_isomorphisms}.
\end{Rem}

%

Proceed to the definition of  full subcategories
$\widetilde{\Ocat}(\theta), \widetilde{\Ocat}^{\t_0}(\theta),
\Ocat(\theta),\Ocat^{\t_0}(\theta)$ in the category $\Walg$-$\Mod$ of left
$\Walg$-modules.

First, we say that a $\Walg$-module $M$ belongs to
$\widetilde{\Ocat}(\theta)$ if $M$ is finitely generated and the following
condition holds:

\begin{itemize}
\item[(*)] for any $m\in M$ there exists $\alpha\in \Z$ such that $\Walg_\beta
m=0$ for any $\beta\geqslant \alpha$.
\end{itemize}

Clearly, $\widetilde{\Ocat}(\theta)$ is an abelian subcategory in the category
$\Walg$-$\Mod$. By definition, $\widetilde{\Ocat}^{\t_0}(\theta)$
consists of all modules  in $\widetilde{\Ocat}(\theta)$, where the
action of $\t_0$ is diagonalizable.

For  example,  take a finitely generated $\Walg^0$-module
$V$. Set $\Verm^\theta(V):=\Walg\otimes_{\Walg_{\geqslant 0}}V$, where
$\Walg_{\geqslant 0}$ acts on $V$ via an epimorphism
$\Walg_{\geqslant 0}\twoheadrightarrow \Walg^0$. The
module $\Verm^\theta(V)$ lies in $\widetilde{\Ocat}(\theta)$. It belongs
to $\widetilde{\Ocat}^{\t_0}(\theta)$ iff the action of $\t_0$ on
$V$ is diagonalizable.

\begin{Lem}\label{Lem:2.2}
For a finitely generated $\Walg$-module $M$ the condition $(*)$ is
equivalent to either of the following two  conditions:
\begin{itemize}
\item[(*$'$)]
$\Walg_{>0}$ acts on $M$ by locally nilpotent endomorphisms.
\item[(*$''$)] Elements $f_i\in \Walg_{>0}$  act on $M$ by locally nilpotent endomorphisms.
\end{itemize}
\end{Lem}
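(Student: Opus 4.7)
The plan is to prove the cyclic implications $(*)\Rightarrow(*')\Rightarrow(*'')\Rightarrow(*)$. For $(*)\Rightarrow(*')$, given $m\in M$ with $\Walg_\beta m=0$ for $\beta\geqslant\alpha_m$ and any $x\in\Walg_{>0}$, I decompose $x=\sum_{\gamma>0}x_\gamma$ into its finitely many $\theta$-homogeneous components. Each summand of $x^N$ is a product $x_{\gamma_1}\cdots x_{\gamma_N}\in\Walg_{\gamma_1+\cdots+\gamma_N}$, so for $N\geqslant\alpha_m/c$ with $c=\min\{\gamma:x_\gamma\neq 0\}$, every term has $\theta$-weight $\geqslant\alpha_m$ and therefore kills $m$. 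The implication $(*')\Rightarrow(*'')$ is immediate since each $f_i\in\Walg_{>0}$.

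The main content is the reverse direction $(*'')\Rightarrow(*)$, which I will carry out in four steps. (i) Use finite generation of $M$: pick generators $m_1,\ldots,m_k$ and, by local nilpotency of each $f_i$ on each $m_j$, fix a single $N$ with $f_i^N m_j=0$ for all $i,j$. (ii) Establish the key inclusion $\Walg_{\geqslant\alpha}\subset\sum_i\Walg f_i^N$ for all $\alpha$ sufficiently large (depending on $N$). (iii) Granting (ii), $\Walg_{\geqslant\alpha_0}m_j\subset\sum_i\Walg\cdot(f_i^N m_j)=0$ for each generator. (iv) Extend to arbitrary $m=\sum_j a_j m_j$: decompose each $a_j$ into $\theta$-homogeneous pieces $a_{j,\gamma}$ of weight in a finite range, and for $\beta$ large enough we have $\Walg_\beta a_{j,\gamma}\subset\Walg_{\geqslant\alpha_0}$, which kills $m_j$ by (iii), giving $\Walg_\beta m=0$.

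The main obstacle is step (ii). My plan is to pass to the associated graded with respect to the Kazhdan filtration $F_\bullet\Walg$, which yields $\gr_F\Walg\cong\K[S]\cong\operatorname{Sym}(\z_\g(e))$, a commutative polynomial algebra whose $\theta$-grading is inherited from the $\theta$-grading on $\z_\g(e)$; the two structures are compatible because $\theta$ commutes with $h$. Complementing $f_1,\ldots,f_n$ by a $\theta$-homogeneous basis $g_1,\ldots,g_m$ of $\z_\g(e)_{\leqslant 0}$ gives polynomial generators of $\K[S]$, and a pigeonhole computation on monomials $\bar f^a\bar g^b$ yields the commutative analog: since $\theta(g_j)\leqslant 0$, any monomial of $\theta$-weight at least $\alpha_0:=nN\max_i\theta(f_i)$ forces $\sum_i a_i\theta(f_i)\geqslant\alpha_0$, hence some $a_i\geqslant N$, placing the monomial in $(\bar f_1^N,\ldots,\bar f_n^N)$. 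I then lift this inclusion to $\Walg$ by induction on Kazhdan filtration degree: for $x\in F_k\Walg\cap\Walg_\beta$ with $\beta\geqslant\alpha_0$, write the principal symbol as $\gr_F x=\sum_i\bar h_i\bar f_i^N$, lift each $\bar h_i$ to an $H_i\in\Walg$ of the appropriate Kazhdan degree and $\theta$-weight (possible because each $F_k\Walg$ is $\theta$-graded), and apply the inductive hypothesis to $x-\sum_i H_if_i^N\in F_{k-1}\Walg\cap\Walg_\beta$; the base case is trivial.
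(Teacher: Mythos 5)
Your proposal is correct and follows essentially the same route as the paper: the easy implications are dismissed, and $(*'')\Rightarrow(*)$ is proved by passing to the associated graded of the Kazhdan filtration, $\gr\Walg\cong\K[S]=S(\z_\g(e))$, observing that the (commutative) ideal generated by the $\bar f_i^N$ contains all sufficiently high $\ad\theta$-weight components, and lifting back to $\Walg$ using $\theta$-stability of the ideal and of the filtration. The only difference is one of explicitness: your pigeonhole computation on monomials and the induction on filtration degree spell out what the paper compresses into ``it is easy to see that $\gr(\I)$ contains $\K[S]_{>\beta}$'' and the subsequent stability argument.
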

\begin{proof}
Clearly, $(*)\Rightarrow (*')\Rightarrow (*'')$. Let us prove the
implication $(*'')\Rightarrow (*)$. Let $m_1,\ldots,m_r$ generate
$M$. Let $N$ be such that $f_i^N m_j=0$ for all $i,j$. Let $\I$ denote the intersection of
$\Walg_{\geqslant 0}$ with the left
ideal in $\Walg$ generated by $f_i^N$. It is easy to
see that $\gr(\I)$ contains $\K[S]_{>\beta}$ for sufficiently large
$\beta$. Since both $\I$  and the filtration are
$\ad\theta$-stable, we see that $\Walg_{>\beta}\subset \I$. So
$\Walg_{>\beta}$ annihilates $m_1,\ldots,m_r$. This condition
implies (*).
\end{proof}

To define the two remaining subcategories we need a certain functor
$\widetilde{\Ocat}(\theta)\rightarrow \Walg^0$-$\Mod$. This
is the functor of taking $\Walg_{>0}$-invariants. More precisely,
set
$$\FF(M)=M^{\Walg_{>0}}:=\{m\in M| w.m=0, \forall w\in
\Walg_{>0}\}.$$

Note that the functor $\Verm^{\theta}:\Walg^0$-$\Mod\rightarrow \widetilde{\Ocat}(\theta)$
is left adjoint to $\FF:\widetilde{\Ocat}^\theta\rightarrow \Walg^0\text{-}\Mod$. Indeed,
$$\Hom_{\Walg}(\Walg\otimes_ {\Walg_{\geqslant 0}}N,M)=\Hom_{\Walg_{\geqslant 0}}(N,M)=\Hom_{\Walg^0}(N,\FF(M)), N\in \Walg^0\text{-}\Mod, M\in \widetilde{\Ocat}(\theta).$$

By definition, $\Ocat(\theta)$ (resp., $\Ocat^{\t_0}(\theta)$)
consists of all modules $M$ in $\widetilde{\Ocat}(\theta)$ (resp.,
$\widetilde{\Ocat}^{\t_0}(\theta)$) with $\dim\FF(M)<\infty$. Note
that all finite dimensional $\Walg$-modules lie in $\Ocat(\theta)$.

Let us state some results describing the properties of our four
categories.

\begin{Prop}\label{Prop:2.2}
\begin{enumerate}
\item The action of $\t_0$ on any module from $\Ocat(\theta)$
is locally finite.
\item Any  module in $\Ocat(\theta)$ contains a submodule from $\Ocat^{\t_0}(\theta)$.
\end{enumerate}
\end{Prop}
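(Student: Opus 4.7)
The plan rests on the observation that $\t_0\subseteq\Walg_0$, since $\t$ is abelian and contains $\theta$. Under $\ad\t_0$, the $\theta$-grading refines to $\Walg=\bigoplus_\mu\Walg^\mu$ (coming from the $T_0\subseteq T\subseteq Q$-action). For $t\in\t_0$ and $w\in\Walg^\mu$, one has $tw=wt+\mu(t)w$; iterating, $uw=w\,\sigma_\mu(u)$ for every $u\in U(\t_0)$, where $\sigma_\mu$ is the algebra automorphism of $U(\t_0)$ sending each $t$ to $t+\mu(t)$. The key lemma follows: for any finite-dimensional $\t_0$-stable subspace $V$ of a $\Walg$-module $M'$, the submodule $\Walg\cdot V$ is $\t_0$-locally finite, because it is spanned by elements $wv$ with $w\in\Walg^\mu$ and $v\in V$, and
\[
U(\t_0)(wv)=w\cdot\sigma_\mu\bigl(U(\t_0)\bigr)v=w\cdot U(\t_0)v\subseteq w\cdot V,
\]
which is finite-dimensional. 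Moreover, if $V$ is spanned by \emph{true} $\t_0$-weight vectors then so is $\Walg\cdot V$, so $\t_0$ acts semisimply there.

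\textbf{For (2).} First, $\FF(M)$ is $\t_0$-stable: since $\t_0\subseteq\Walg_0$ and $[\Walg_0,\Walg_{>0}]\subseteq\Walg_{>0}$, one sees as in Lemma~\ref{Lem:2.2} that $\t_0\cdot\FF(M)\subseteq\FF(M)$. Being a finite-dimensional representation of the abelian Lie algebra $\t_0$, $\FF(M)$ decomposes into generalized weight spaces, and any nonzero such space contains a true weight vector $v$. The submodule $\Walg v\subseteq M$ is finitely generated, $\Walg_{>0}$ acts on it locally nilpotently, and $\FF(\Walg v)\subseteq\FF(M)$ is finite-dimensional, so $\Walg v\in\Ocat(\theta)$. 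Applying the key lemma to $V=\K v$, $\t_0$ acts semisimply on $\Walg v$, placing it in $\Ocat^{\t_0}(\theta)$.

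\textbf{For (1).} I will build a filtration $0=M_0\subseteq M_1\subseteq\cdots\subseteq M$ by taking $M_i\subseteq M$ to be the preimage of $\Walg\cdot\FF(M/M_{i-1})$. Because $\Walg$ is Noetherian (its Kazhdan-associated graded is $\K[S]$) and $M$ is finitely generated, the chain stabilizes at some $M_k$; at that stage $\FF(M/M_k)=0$, and then the nonvanishing-of-$\FF$ argument in Lemma~\ref{Lem:2.2} forces $M=M_k$. Each $M_i/M_{i-1}$ is $\t_0$-locally finite by the key lemma. To propagate local finiteness up the filtration: given $v\in M_i$ with image $\bar v\in M_i/M_{i-1}$, pick a finite-dimensional $\t_0$-invariant $\bar V\ni\bar v$, lift it to a finite-dimensional $V_0\subseteq M_i$ containing $v$, and absorb the corrections $tV_0-V_0\subseteq M_{i-1}$ into a finite-dimensional $\t_0$-invariant subspace of $M_{i-1}$ provided by the inductive hypothesis. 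Induction then yields that $M=M_k$ is $\t_0$-locally finite. The main technical hurdle is this filtration-plus-extension step; it depends on Noetherianity of $\Walg$, on the nonvanishing-of-$\FF$ principle from Lemma~\ref{Lem:2.2}, and on the standard extension behaviour of local finiteness under short exact sequences.
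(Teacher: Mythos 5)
Your key lemma and your treatment of part (2) are correct, and they are essentially the paper's own argument made explicit: the paper takes the image of $\Verm^\theta(\FF(M)_{diag})$, where $\FF(M)_{diag}$ is the span of all $\t_0$-weight vectors of $\FF(M)$, and its (unstated) reason why $\t_0$ acts diagonalizably on that image is exactly your computation $t(wv)=(\mu(t)+\lambda(t))wv$ for $w\in\Walg^\mu$. (Both you and the paper silently use that $\FF(M)\neq 0$ for $M\neq 0$; this is true, but your attribution of it to Lemma \ref{Lem:2.2} is a phantom citation — that lemma contains no such statement. A correct two-line argument: by (*) there is $\alpha\in\Z$ with $\Walg_{\geqslant\alpha}m=0$, and since the $\ad\theta$-grading is by integers, $(\Walg_{>0})^k\subseteq\Walg_{\geqslant k}$; taking $k$ maximal with $(\Walg_{>0})^km\neq 0$ produces a nonzero vector killed by $\Walg_{>0}$.)

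The genuine gap is in part (1), at the step ``Each $M_i/M_{i-1}$ is $\t_0$-locally finite by the key lemma.'' Your key lemma needs the generating subspace $V$ to be finite dimensional (or at least $\t_0$-locally finite), and you apply it with $V=\FF(M/M_{i-1})$. For $i=1$ this is fine since $M\in\Ocat(\theta)$, but for $i\geqslant 2$ nothing you have said guarantees $\dim\FF(M/M_{i-1})<\infty$: the functor $\FF$ is only left exact, and finite dimensionality of $\FF$ on quotients is precisely the statement that $\Ocat(\theta)$ is closed under quotients, i.e.\ Corollary \ref{Cor:2.6}, which is proved only later (via Proposition \ref{Prop:2.3}) and which you do not invoke. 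Without it, $\FF(M/M_{i-1})$ could a priori be infinite dimensional and not even $\t_0$-locally finite, and then the key lemma gives nothing. The paper's proof is arranged so that this hypothesis is never needed on the quotient steps: it generates with the span of true $\t_0$-weight vectors $\FF(\cdot)_{diag}$, which is $\t_0$-semisimple whatever its dimension, so the resulting subquotients are automatically locally finite; the price is that one must know that $\FF$ of a nonzero quotient still contains a weight vector (the paper is terse about this too). To repair your argument you should either first establish quotient-stability of $\Ocat(\theta)$ (Proposition \ref{Prop:2.3} and Corollary \ref{Cor:2.6} do not depend on the present proposition, so the order can be rearranged), after which your filtration works verbatim, or else switch to generating by weight vectors as in the paper and supply the missing nonvanishing statement for the diagonalizable part of $\FF$ on the quotients; as written, the induction step does not go through.
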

\begin{proof}
The subspace $\FF(M)\subset M$ is finite dimensional.
 Let $\FF(M)_{diag}$ be the sum of all
weight subspaces for $\t_0$ in $\FF(M)$. Then $\FF(M)_{diag}$ is a $\Walg^0$-submodule in
$\FF(M)$. Let $M_0$ be the image of $\Verm^\theta(\FF(M)_{diag})$ in $M$ under
the natural homomorphism. Then the action of $\t_0$ on $M_0$ is diagonalizable.
Hence the second assertion.

Since $M$ is a Noetherian $\Walg$-module, we see that there is a filtration $M_0\subset M_1\subset \ldots\subset
M_k=M$ such that the action of $\t_0$ on every quotient $M_i/M_{i-1}$  is locally finite.
Assertion 1 follows.
\end{proof}

\begin{Prop}\label{Prop:2.3}
Let $M\in \widetilde{\Ocat}(\theta)$. Then the following conditions
are equivalent:
\begin{enumerate}
\item $M\in \Ocat(\theta)$.
\item For any $\alpha\in \K$ the root subspace $M^\alpha:=\bigcup_{i}\ker(\theta-\alpha)^i$
is finite dimensional.
\item For any $\widetilde{\alpha}\in \t_0^*$ the root subspace $M^{(\widetilde{\alpha})}:=\bigcup_{i}\left(\bigcap_{\xi\in\t_0}
\ker(\xi-\langle\widetilde{\alpha},\xi\rangle)^i\right)$ is finite dimensional.
\end{enumerate}
\end{Prop}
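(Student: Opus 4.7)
The plan is to prove (2)$\Leftrightarrow$(3) and (1)$\Leftrightarrow$(3), using (i) the PBW-type vector-space decomposition $\Walg\cong\Walg^{<0}\otimes\Walg^{0,\mathrm{sub}}\otimes\Walg^{>0}$ induced by the $\theta$-grading $\z_\g(e)=\z_\g(e)_{<0}\oplus\z_\g(e)_{0}\oplus\z_\g(e)_{>0}$, where the three tensor factors are the subalgebras generated by the three weight components; and (ii) the Verma construction $\Verm^\theta$ together with its $\FF$-adjunction. A preliminary remark: $\FF(M)$ is $\t_0$-stable, hence $\theta$-stable, since for $\xi\in\t_0$, $w\in\Walg_{>0}$ we have $[\xi,w]\in\Walg_{>0}$ annihilating $m\in\FF(M)$, whence $w\xi m=\xi wm-[\xi,w]m=0$.

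The backbone of the argument is the lemma that any nonzero $N\in\widetilde{\Ocat}(\theta)$ has $\FF(N)\neq 0$: take $0\neq n\in N$ and let $\alpha\geqslant 0$ be minimal with $\Walg_{>\alpha}n=0$ (it exists by (*)); if $\alpha=0$ then $n\in\FF(N)$, and otherwise any $0\neq v=wn$ with $w\in\Walg_\alpha$ lies in $\FF(N)$, because for $\gamma>0$ and $w'\in\Walg_\gamma$ we get $w'v\in\Walg_{\gamma+\alpha}n=0$. Combined with Noetherianity of $\Walg$, this yields a finite filtration $0=M_0\subset M_1\subset\cdots\subset M_k=M$ whose successive quotients are quotients of $\Verm^\theta(V_j)$ for finite-dimensional $\t_0$-stable subspaces $V_j\subseteq\FF(M/M_{j-1})$.

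For (1)$\Rightarrow$(3): by PBW, $\Verm^\theta(V)\cong\Walg^{<0}\otimes V$ as a vector space, and $\Walg^{<0}$ has finite-dimensional $\theta$-weight spaces, each of which contains only finitely many $\t_0$-weights. Hence if $\dim V<\infty$, the generalized $\t_0$-root subspaces of $\Verm^\theta(V)$ are finite-dimensional, and so are those of any quotient. Since $\dim\FF(M)<\infty$ implies $\dim V_j<\infty$ for each $j$ in the filtration above, $M$ has finite-dimensional generalized $\t_0$-root subspaces. The implication (2)$\Rightarrow$(3) is trivial from $M^{(\widetilde\alpha)}\subseteq M^{\widetilde\alpha(\theta)}$.

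For (3)$\Rightarrow$(2) and (3)$\Rightarrow$(1): I first establish local finiteness of $\t_0$ on $M$ under (3) alone, by reducing to each $\Verm^\theta(V_j)$-subquotient in the filtration: replacing $V_j$ by its $\t_0$-diagonalizable part gives a module with locally finite $\t_0$-action, and the non-diagonalizable complement must vanish under (3) since it would otherwise contribute an infinite-dimensional generalized $\t_0$-root subspace. Once $M=\bigoplus_{\widetilde\alpha}M^{(\widetilde\alpha)}$, we get $M^\alpha=\bigoplus_{\widetilde\alpha(\theta)=\alpha}M^{(\widetilde\alpha)}$, which is a finite direct sum since $\Walg^{<0}$ has only finitely many $\t_0$-weights in each $\theta$-weight (and the $\t_0$-support of $M$ lies in a finite union of shifts of these); the same bound, applied to the $\theta$-weight zero component, forces $\FF(M)$ to be finite-dimensional, giving (1). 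The principal obstacle is this last step—propagating (3) to local finiteness of $\t_0$ without the benefit of Proposition~2.2, which is only available once (1) is in hand.
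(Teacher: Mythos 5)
The crux of your argument --- deducing local finiteness of the $\t_0$-action from hypothesis (3) --- does not work, and you have in effect flagged this yourself. A vector on which $\t_0$ fails to act locally finitely lies in \emph{no} generalized root subspace $M^{(\widetilde{\alpha})}$ at all, so condition (3) imposes no constraint on the ``non-diagonalizable complement''; it cannot force that complement to vanish. Nothing in the definition of $\widetilde{\Ocat}(\theta)$ supplies $\t_0$-weight vectors: for instance, if $V$ is a finitely generated $\Walg^0$-module on which the (central) image of $\theta$ has no generalized eigenvectors, then all root subspaces of $\Verm^\theta(V)$ vanish, and finiteness of root subspaces says nothing about $\FF$. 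For this reason the paper's proof of $(3)\Rightarrow(1)$ does not attempt your reduction: it first treats irreducible $M$ (which is generated by a single finite-dimensional weight space $\FF(M)_{\widetilde{\alpha}}\subset M^{(\widetilde{\alpha})}$), and then reduces the general case to finite length of $M$; the essential external input is the theorem of Brundan--Goodwin--Kleshchev (\cite{BGK}, Corollary 4.11) that $\Verm^\theta(V)$ has finite length for $V$ finite dimensional irreducible, combined with right exactness of $\Verm^\theta$, Noetherianity, and left exactness of $\FF$. Your sketch invokes neither finite length nor any substitute, and weight-counting alone cannot bound the number of $\t_0$-weights occurring in $\FF(M)$ --- that bound is exactly what the finite-length theorem provides.

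The filtration underlying both of your directions is also not justified. Your backbone lemma ($\FF(N)\neq 0$ for $N\neq 0$) is fine, but to present $M_j/M_{j-1}$ as a quotient of $\Verm^\theta(V_j)$ you need $V_j$ to be a $\Walg^0$-submodule of $\FF(M/M_{j-1})$, not merely a $\t_0$-stable subspace, and you need a nonzero \emph{finite-dimensional} such submodule to exist --- again a local-finiteness issue that is not automatic at this stage. Moreover, in $(1)\Rightarrow(3)$ you control $\dim V_j$ by $\dim\FF(M)$, but $\FF(M/M_{j-1})$ is not controlled by $\FF(M)$, since $\FF$ is only left exact; quotient-stability of $\Ocat(\theta)$ is Corollary \ref{Cor:2.6}, which is deduced \emph{from} this proposition, so appealing to it here would be circular. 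Note that the paper avoids the filtration entirely in this direction: for $(1)\Rightarrow(2)$ it picks $\alpha$ maximal for $\preceq$ (possible by Lemma \ref{Lem:2.4}) with $\dim M^\alpha=\infty$, observes that the finitely many generators $f_i$ of $\Walg\Walg_{>0}$ send $M^\alpha$ into finitely many finite-dimensional root spaces, and concludes that $\bigcap_i\ker f_i|_{M^\alpha}=M^\alpha\cap\FF(M)$ is infinite dimensional, contradicting (1). Adopting that maximality argument, and the finite-length route for $(3)\Rightarrow(1)$, would repair your proof.
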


In
the proof of Proposition \ref{Prop:2.3} we will need the following simple lemma.

\begin{Lem}\label{Lem:2.4}
Define the partial order on $\K$ as follows: $x\preceq y$ if $y-x$
is a nonnegative integer. We write $x\prec y$ if
$x\preceq y, x\neq y$. For any $M\in \widetilde{\Ocat}(\theta)$
there exist $\alpha_1,\ldots,\alpha_r\in \K$ such that for any
eigenvalue $\beta$ of $\theta$ on $M$ there exists $i$ with
$\beta\preceq \alpha_i$.
\end{Lem}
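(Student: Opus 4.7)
The plan is to pass to the $\Walg$-submodule of $M$ spanned by (generalized) $\theta$-eigenvectors, exploit that it is finitely generated, and then apply condition $(*)$ to a set of eigenvector generators.

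First I would record a weight-shift principle: since $\theta\in\Walg_0$ and $[\theta,w]=\beta w$ for $w\in\Walg_\beta$, a short induction gives $(\theta-\mu-\beta)^kw=w(\theta-\mu)^k$ for every $k$, so $\Walg_\beta\cdot M^\mu\subset M^{\mu+\beta}$, where $M^\mu:=\bigcup_k\ker(\theta-\mu)^k$ is the generalized $\mu$-eigenspace (notation as in Proposition \ref{Prop:2.3}). Consequently $M':=\sum_\mu M^\mu$, which is a direct sum since generalized eigenspaces for distinct eigenvalues intersect trivially, is a $\Walg$-submodule of $M$.

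Next, $\gr\Walg=\K[S]$ is a finitely generated commutative, hence Noetherian, algebra, so $\Walg$ itself is left Noetherian; thus the submodule $M'$ of the finitely generated $\Walg$-module $M$ is itself finitely generated. Since $M'$ is spanned by generalized $\theta$-eigenvectors, we may replace any finite generating set by the individual eigenvector summands of its elements, obtaining finitely many generalized $\theta$-eigenvectors $v_1,\ldots,v_r$, say with $v_k\in M^{\mu_k}$, that together generate $M'$. Now apply $(*)$ to each $v_k$ to produce $n_k\in\Z$ with $\Walg_\gamma v_k=0$ for $\gamma\geqslant n_k$. The weight-shift principle gives $\Walg_\gamma v_k\subset M^{\mu_k+\gamma}$, so the only generalized eigenspaces meeting $\Walg v_k=\sum_{\gamma<n_k}\Walg_\gamma v_k$ correspond to values $\nu$ with $\nu-\mu_k\in\Z_{<n_k}$, i.e.\ $\nu\preceq\mu_k+n_k-1$. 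Setting $\alpha_k:=\mu_k+n_k-1$ and observing that every eigenvalue of $\theta$ on $M$ is also an eigenvalue on $M'=\sum_k\Walg v_k$, we are done.

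The only real subtlety is that one should not try to work directly with a given generating set of $M$: since $\theta$ need not act locally finitely on $M$, such generators need not decompose into $\theta$-eigenvectors and the weight-shift argument does not apply to them. Passing to $M'$ together with Noetherianity of $\Walg$ is the device that supplies an eigenvector generating set on which $(*)$ can be used to bound the eigenvalues.
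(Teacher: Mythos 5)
Your proof is correct and follows essentially the same route as the paper's one-line argument (``take a finite number of homogeneous generators of $M$ and use condition (*)''): apply (*) to finitely many $\theta$-weight generators and use the shift $\Walg_\gamma M^{\mu}\subset M^{\mu+\gamma}$ to bound the eigenvalues. The additional step you supply --- passing to the submodule spanned by generalized $\theta$-eigenvectors and invoking Noetherianity of $\Walg$ to extract eigenvector generators --- is precisely the justification the paper leaves implicit, since a general $M\in\widetilde{\Ocat}(\theta)$ need not be generated by $\theta$-weight vectors.
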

\begin{proof}
Take finite number of homogeneous generators of $M$ and use the
condition (*).
\end{proof}

Proposition \ref{Prop:2.3} and Lemma \ref{Lem:2.4} show that for $\t=\t_0$ the category
$\Ocat^{\t}(\theta)$ consists of the same modules as the category $\Ocat(e)$ studied in \cite{BGK}: for example, the implication (1)$\Rightarrow$(3) shows that any module from $\Ocat^{\t}(\theta)$ lies in $\Ocat(e)$.
Recall that in \cite{BGK} the category $\Ocat(e)$ was defined as the full subcategory in
$\Walg$-$\Mod$ consisting of all modules, where $\t$ acts diagonalizably, all weight subspaces are finite dimensional,
and the set of weights is bounded from above. Their notion of being "bounded from above" is equivalent to
that mentioned in Lemma \ref{Lem:2.4} although is stated in a different way.

\begin{proof}[Proof of Proposition \ref{Prop:2.3}]
Let us check $(1)\Rightarrow (2)$. Assume the converse. Choose
maximal (w.r.t $\preceq$) $\alpha\in \K$ such that $\dim
M^\alpha=\infty$. Let $f_1,\ldots,f_n$ be such as above. Then, by the choice of $\alpha$, we see that $\dim
f_i M^\alpha<\infty$ for any $i$.  We see that $f_i
M^{\alpha}\subset \bigoplus_{\alpha\prec \beta} M^{\beta}$ for any
$i$. It follows that the intersection of the kernels of $f_i$ in
$M^\alpha$ is infinite dimensional. But this intersection coincides
with $M^\alpha\cap\FF(M)$. Contradiction.

The implication $(2)\Rightarrow (3)$ follows from $M^{(\widetilde{\alpha})}\subset M^{\langle\widetilde{\alpha},\theta\rangle}$. So it remains to check that
$(3)\Rightarrow (1)$.

For any $\widetilde{\alpha}\in \t_0^*$ the weight subspace  $$\FF(M)_{\widetilde{\alpha}}:=\{v\in \FF(M)|\xi.v=\langle\widetilde{\alpha},\xi\rangle v, \forall \xi\in\t_0 \}$$
is a finite dimensional $\Walg^0$-submodule. Let $M_0$ denote the submodule
in $M$ generated by $\FF(M)_{\widetilde{\alpha}}$. Clearly, $M_0$ is isomorphic
to the quotient of $\Verm^\theta(\FF(M)_{\widetilde{\alpha}})$.  If $M$ is irreducible,
then $M=M_0$. In particular, $\langle\widetilde{\beta},\theta\rangle\prec\langle\widetilde{\alpha},\theta\rangle$
for any $\t_0$-weight $\widetilde{\beta}$ of $\FF(M)$. It follows that $M\in \Ocat(\theta)$.
In general, since $\FF$ is a left exact functor, it remains to check that $M$ has finite length.

 As Brundan, Goodwin
and Kleshchev proved in \cite{BGK}, Corollary 4.11, the module $\Verm^\theta(V)$ has finite length
provided $V$ is finite dimensional and irreducible. Actually, they considered the case when $\t=\t_0$ but their proof
extends to the general case directly. Since $\Verm^\theta$ is a right exact functor, we see that $\Verm^\theta(\FF(M)_{\widetilde{\alpha}})$ has finite length. Thus $M_0$ has finite length.
Finally, since $M$ is Noetherian, we see that $M$ has finite length.
\end{proof}

From this proposition and its proof we deduce the following

\begin{Cor}\label{Cor:2.6}
The subcategory $\Ocat(\theta)$ is a Serre subcateogry in
$\widetilde{\Ocat}(\theta)$ (i.e., it is closed w.r.t. taking subquotients and
extensions) and any module in $\Ocat(\theta)$ has
finite length. Furthermore, $\Ocat(\theta)$ contains all Verma modules $\Verm^\theta(V)$
for finite dimensional $V$.
\end{Cor}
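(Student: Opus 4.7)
The plan is to read off each assertion of the corollary from Proposition \ref{Prop:2.3} and its proof, together with a PBW-style triangular decomposition of $\Walg$ for the Verma module claim.

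For the Serre property I would invoke the equivalence (1)$\Leftrightarrow$(2) of Proposition \ref{Prop:2.3}. By Proposition \ref{Prop:2.2}(1), the element $\theta\in\t_0$ acts locally finitely on every $M\in\Ocat(\theta)$, so the $\theta$-root spaces $M^\alpha$ decompose $M$ into a direct sum. Given a short exact sequence $0\to M'\to M\to M''\to 0$ in $\widetilde{\Ocat}(\theta)$ with two of the three terms in $\Ocat(\theta)$, local finiteness of $\theta$ propagates to the third by the usual $\K[\theta]$-orbit argument (choose $m\in M$, reduce its orbit modulo $M'$ to obtain a polynomial $p(\theta)$ with $p(\theta)m\in M'$, then use that both $\K[\theta]p(\theta)m$ and $\K[\theta]m/\K[\theta]p(\theta)m$ are finite dimensional). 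One then obtains a short exact sequence $0\to (M')^\alpha\to M^\alpha\to (M'')^\alpha\to 0$ for every $\alpha$, and finite dimensionality of any two of the three terms forces it for the third, giving closure under subquotients and extensions.

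The finite length statement is essentially already in the proof of Proposition \ref{Prop:2.3}; I would iterate that argument. For $M\in\Ocat(\theta)$, pick a $\t_0$-weight $\widetilde{\alpha}$ of $\FF(M)$ and let $M_0\subset M$ be the submodule generated by $\FF(M)_{\widetilde{\alpha}}$, so that $M_0$ is a quotient of $\Verm^\theta(\FF(M)_{\widetilde{\alpha}})$. The latter has finite length by the generalisation of \cite{BGK}, Corollary 4.11 from $\t=\t_0$ to arbitrary $\t_0$, combined with right exactness of $\Verm^\theta$ and finite dimensionality of $\FF(M)_{\widetilde{\alpha}}$. Iterating on $M/M_0$, and using that any nonzero object of $\widetilde{\Ocat}(\theta)$ has nonzero $\FF$-image (a weight vector for $\theta$ annihilated by $\Walg_{>0}$ exists by the local nilpotency in Lemma \ref{Lem:2.2}), one produces an ascending chain of finite-length submodules which must stabilise at $M$ by Noetherianity.

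For the last assertion, I would verify condition (2) of Proposition \ref{Prop:2.3} for $\Verm^\theta(V)$ when $\dim V<\infty$. Choose a $\theta$-homogeneous basis of $\z_\g(e)$, split it according to the sign of the $\theta$-weight, and let $N^-\subset\Walg$ denote the span of ordered monomials in the strictly negative-weight basis elements. A PBW-type argument (lifting the obvious decomposition of $\gr\Walg=S(\z_\g(e))$ to $\Walg$ in an $\ad\theta$-equivariant manner) produces an isomorphism of right $\Walg_{\geqslant 0}$-modules $N^-\otimes_\K\Walg_{\geqslant 0}\xrightarrow{\sim}\Walg$, and hence $\Verm^\theta(V)\cong N^-\otimes_\K V$ as $\K$-vector spaces. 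Every $\theta$-weight space of $N^-$ is finite dimensional, since only finitely many monomials in strictly negative-weight generators can have a given total weight, and tensoring with finite-dimensional $V$ preserves this. Hence $\Verm^\theta(V)\in\Ocat(\theta)$. The main obstacle I anticipate is precisely this triangular decomposition: producing an $\ad\theta$-equivariant PBW splitting of $\Walg$ from the one on $\gr\Walg$ needs some care, though it is essentially standard. Everything else is formal once Proposition \ref{Prop:2.3} is available.
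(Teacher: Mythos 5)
Your overall route is exactly the paper's: Corollary \ref{Cor:2.6} is simply read off from Proposition \ref{Prop:2.3} together with the finite-length argument contained in its proof, and your treatment of the Serre property (criterion (2) plus local finiteness of $\theta$ from Proposition \ref{Prop:2.2}) and of finite length (iterating the $M_0$-construction and invoking Noetherianity) is that deduction written out in detail. For closure under extensions you could even bypass root spaces: left exactness of $\FF$ gives $\dim\FF(M)\leqslant\dim\FF(M')+\dim\FF(M'')$ directly, so criterion (2) is really only needed for subquotients.

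The one step that is false as you state it is the triangular decomposition you flag as the main obstacle. The multiplication map $N^-\otimes_\K\Walg_{\geqslant 0}\rightarrow\Walg$ is surjective but not injective as soon as $\z_\g(e)$ has both strictly negative and strictly positive $\ad\theta$-weights: if $u\in\z_\g(e)$ has weight $-a<0$ and $w\in\z_\g(e)$ has weight $b>0$, then for $kb\geqslant a$ the product $uw^k$ is $\ad\theta$-homogeneous of nonnegative weight, hence lies in $\Walg_{\geqslant 0}$, and $u\otimes w^k-1\otimes uw^k$ is a nonzero element of the kernel. The point is that $\Walg_{\geqslant 0}$ is the full nonnegative weight subspace, not a subalgebra generated by nonnegative-weight generators, so no $\ad\theta$-equivariant PBW splitting can produce the asserted isomorphism of right $\Walg_{\geqslant 0}$-modules. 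Fortunately you never need injectivity. In the paper's model $\Walg=\K[S]$ with the modified product, $\z_\g(e)$ is already an $\ad\theta$-stable generating subspace of $\Walg$, and the usual induction on the (Kazhdan) filtration, using $\gr\Walg=S(\z_\g(e))$, shows that ordered monomials with the negative-weight factors on the left span $\Walg$, i.e.\ $\Walg=N^-\Walg_{\geqslant 0}$. Hence $n\otimes v\mapsto n\otimes v$ is a surjection $N^-\otimes_\K V\twoheadrightarrow\Verm^\theta(V)$, and it intertwines the $\theta$-actions because $\theta$ has weight $0$ and so lies in $\Walg_{\geqslant 0}$. Since every $\ad\theta$-weight space of $N^-$ is finite dimensional (the generators have strictly negative integer weights) and $\dim V<\infty$, every root space $\Verm^\theta(V)^\alpha$ is finite dimensional, which is condition (2) of Proposition \ref{Prop:2.3}; whether the surjection is actually bijective is irrelevant here. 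With the isomorphism claim weakened to this spanning statement, your proposal is complete and coincides with the argument the paper intends.
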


As Brundan, Goodwin and Kleshchev noticed in \cite{BGK} (in the case
$\t=\t_0$, the general case is completely analogous), the following
statement holds.

\begin{Prop}\label{Prop:2.8}
Let $V$ be an irreducible finite dimensional
$\Walg^0$-module.  There is a unique simple quotient
$L^\theta(V)$ of $\Verm^\theta(V)$ and any simple module in
$\Ocat(\theta)$ is isomorphic to some $L^\theta(V)$.
\end{Prop}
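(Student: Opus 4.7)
The plan is to run the standard highest-weight argument using the adjoint pair $(\Verm^\theta,\FF)$ established just after Lemma~\ref{Lem:2.2}. Existence of $L^\theta(V)$ will come from showing that $1\otimes V$ meets every proper submodule of $\Verm^\theta(V)$ trivially, and the classification of simples will follow from showing that $\FF(L)\neq 0$ for every simple $L\in\Ocat(\theta)$.

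For the first assertion I first observe that $1\otimes V\subseteq \FF(\Verm^\theta(V))$: since $V$ is a $\Walg^0$-module, the kernel $\Walg_{\geqslant 0}^+$ of $\Walg_{\geqslant 0}\twoheadrightarrow \Walg^0$, and in particular its subspace $\Walg_{>0}\subseteq \Walg_{\geqslant 0}^+$, annihilates $1\otimes V$ by construction of the tensor product. Next I claim that any proper $\Walg$-submodule $N\subsetneq \Verm^\theta(V)$ satisfies $N\cap(1\otimes V)=0$. The intersection is a $\Walg_{\geqslant 0}$-submodule of $1\otimes V$ on which $\Walg_{\geqslant 0}^+$ acts trivially, hence a $\Walg^0$-submodule of the irreducible $V$; it cannot equal all of $V$, since $1\otimes V$ generates $\Verm^\theta(V)$ as a $\Walg$-module, which would force $N=\Verm^\theta(V)$. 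Consequently the sum of all proper submodules still has trivial intersection with $1\otimes V$, is itself proper, and is the unique maximal proper submodule; its quotient is the simple module $L^\theta(V)$.

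For the second assertion let $L\in\Ocat(\theta)$ be simple. The key step is producing a nonzero $\Walg_{>0}$-invariant in $L$. By Proposition~\ref{Prop:2.2}(1) and the fact that $\theta\in\t_0$, the operator $\theta$ acts locally finitely on $L$. By Lemma~\ref{Lem:2.4} the $\theta$-weights of $L$ lie inside $\bigcup_{i=1}^r(\alpha_i-\Z_{\geqslant 0})$ for some $\alpha_1,\dots,\alpha_r\in\K$, and since $L$ is cyclic all its $\theta$-weights lie in a single coset of $\Z$; hence there exists a $\theta$-weight $\beta$ of $L$ for which $\beta+k$ fails to be a weight for every $k\in\Z_{>0}$. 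Any generalized $\theta$-eigenvector of weight $\beta$ is then annihilated by $\Walg_\alpha$ for every $\alpha>0$, since $\Walg_\alpha$ shifts the $\theta$-weight by $\alpha$ and the target generalized weight space is zero. Thus $\FF(L)\neq 0$, and being finite-dimensional it contains an irreducible $\Walg^0$-submodule $V$.

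Finally, the inclusion $V\hookrightarrow\FF(L)$ corresponds under the adjunction $\Hom_\Walg(\Verm^\theta(V),L)\cong\Hom_{\Walg^0}(V,\FF(L))$ to a nonzero $\Walg$-homomorphism $\Verm^\theta(V)\to L$, which is surjective by simplicity of $L$; the first part then gives $L\cong L^\theta(V)$. The only slightly technical step is the production of the $\Walg_{>0}$-invariant vector via weight maximality; everything else is the standard existence-and-classification-of-simples template for highest weight categories.
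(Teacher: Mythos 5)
Your overall route is the expected one: the paper itself gives no argument for Proposition \ref{Prop:2.8} (it simply attributes the statement to \cite{BGK}), so you are reconstructing the standard highest-weight argument rather than paralleling anything in the text. Your second half is fine: local finiteness of $\theta$ (Proposition \ref{Prop:2.2}(1)), the bound on weights from Lemma \ref{Lem:2.4}, the existence of a $\preceq$-maximal generalized $\theta$-weight $\beta$, the observation that $\Walg_\alpha$ with $\alpha>0$ maps the generalized $\beta$-eigenspace into the (zero) generalized $(\beta+\alpha)$-eigenspace, and then the adjunction $\Hom_{\Walg}(\Verm^\theta(V),L)\cong\Hom_{\Walg^0}(V,\FF(L))$ all go through.

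The first half, however, has one step that does not follow as written: from ``every proper submodule $N$ satisfies $N\cap(1\otimes V)=0$'' you infer ``consequently the sum of all proper submodules has trivial intersection with $1\otimes V$.'' That implication is false for an arbitrary family of submodules (two subspaces can each meet a given subspace trivially while their sum does not), so the existence of a unique maximal proper submodule is not yet established. The missing ingredient is the $\theta$-grading of $\Verm^\theta(V)$, which you only invoke later. Concretely: the image of $\theta$ in $\Walg^0$ is central (it lies in $\Walg_0$, commutes with $\Walg_0$, and $\Walg^0$ is a quotient of $\Walg_0$ because $\Walg_{>0}\subset\Walg_{\geqslant 0}^+$), so by Schur's lemma $\theta$ acts on the irreducible finite-dimensional $V$ by a scalar $c$. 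A PBW-type argument, using that $\z_\g(e)$ generates $\gr\Walg\cong\K[S]$ and choosing a homogeneous basis with the $\ad\theta$-negative elements written to the left (cf. the discussion preceding Lemma \ref{Lem:2.2}), shows that $\Verm^\theta(V)$ is the direct sum of $\theta$-eigenspaces with eigenvalues in $c-\Z_{\geqslant 0}$ and that the eigenvalue-$c$ space is exactly $1\otimes V$; this also gives $1\otimes V\cong V\neq 0$, which you use tacitly when you treat $1\otimes V$ as an irreducible $\Walg^0$-module. Since $\theta\in\Walg$, every submodule is the direct sum of its intersections with these eigenspaces; hence a proper submodule, meeting $1\otimes V$ trivially by your argument, lies in the sum of the eigenspaces with eigenvalues $c-k$, $k>0$, and therefore so does the sum of all proper submodules, which is consequently proper. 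With this insertion the existence and uniqueness of $L^\theta(V)$ is secured, and the rest of your proof is complete.
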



\begin{Rem}
Note that $\widetilde{\Ocat}(\theta)$ depends on the choice of $\theta$.
However, it follows from Lemma \ref{Lem:2.2} that two categories
$\widetilde{\Ocat}(\theta_1),\widetilde{\Ocat}(\theta_2)$ consist of the same modules
provided the spaces $\z_\g(e)_{>0}$ constructed for
$\theta_1,\theta_2$ coincide. So we get only finitely many different
categories $\Ocat(\theta)$.
\end{Rem}

\section{Generalized Whittaker
modules}\label{SECTION_Milicic_Soergel} In this section we introduce
a certain category of $\U$-modules generalizing Whittaker modules
mentioned in Section \ref{SECTION_Walg}. Also we state our main
result here.

Recall that we have fixed  $\theta\in \X^*(T)\cap\t_0$. Let $
\g_{>0}$ denote the sum of all eigenspaces  for $\ad\theta$ with
positive eigenvalues. Then $\g_{\geqslant 0}:=\l\oplus \g_{>0}$ is a
parabolic subalgebra of $\g$ and $\g_{>0}$ is its nilpotent radical.

Recall the grading $\g=\bigoplus_{i}\g(i)$ introduced in the
beginning of Section \ref{SECTION_Walg}. For $\l(i):=\l\cap\g(i)$ we
have $\l=\bigoplus_{i}\l(i)$.  By analogy with the subalgebra
$\m\subset \g$ define a subalgebra $\underline{\m}\subset\l$ and its
shift $\underline{\m}_\chi$ so that we can define the W-algebra
$\underline{\Walg}=(\underline{\U}/(\underline{\U}\underline{\m}_\chi))^{\ad\underline{\m}}$, where $\underline{\U}:=U(\l)$.
Finally, set $\widetilde{\m}=\underline{\m}\oplus \g_{>0},
\widetilde{\m}_\chi:=\{\xi-\langle\xi,\chi\rangle, \xi\in
\widetilde{\m}\}$.

Let $M$ be a finitely generated left $\U$-module. We say that $M$ is
a {\it generalized  Whittaker module} for $(e,\theta)$ if  $\widetilde{\m}_\chi$ acts
on $M$ by locally nilpotent endomorphisms. The full subcategory of
$\U$-$\Mod$ consisting of all generalized Whittaker modules for
$(e,\theta)$ will be denoted by $\widetilde{\Wh}(e,\theta)$. By
$\widetilde{\Wh}^{\t_0}(e,\theta)$ we denote the  full subcategory
in $\widetilde{\Wh}(e,\theta)$ consisting of all modules with
diagonalizable action of $\t_0$.

For example, let $V$ be a $\underline{\Walg}$-module. Set
$$\Verm^{e,\theta}(V)=\U\otimes_{U(\g_{\geqslant 0})}\Sk_\l(V), $$
where $\Sk_\l(V)=(\underline{\U}/\underline{\U}\underline{\m}_\chi)\otimes_{\underline{\Walg}}V$.
 This module always lies in
$\widetilde{\Wh}(e,\theta)$. It lies in
$\widetilde{\Wh}^{\t_0}(e,\theta)$ iff the action of $\t_0$ on $V$
is diagonalizable.

Let us construct a functor from $\widetilde{\Wh}(e,\theta)$ to the
category $\underline{\Walg}$-$\Mod$. Set
$\GF(M):=M^{\widetilde{\m}_\chi}$. The algebra $\underline{\U}$
acts naturally on $M^{\g_{>0}}$. Since
$\GF(M)=(M^{\g_{>0}})^{\underline{\m}_\chi}$, there is a natural action of
$\underline{\Walg}$  on $\GF(M)$.  It is clear that $\GF(M)\neq
\{0\}$ provided $M\neq \{0\}$. We say that $M\in
\widetilde{\Wh}(e,\theta)$ is of {\it finite type} if $\dim
\GF(M)<\infty$. The category of all finite type modules is denoted
by $\Wh(e,\theta)$. Finally, set
$\Wh^{\t_0}(e,\theta)=\Wh(e,\theta)\cap
\widetilde{\Wh}^{\t_0}(e,\theta)$.

Note also that, analogously to the previous section, the functor $\Verm^{e,\theta}:\underline{\Walg}\text{-}\Mod
\rightarrow \widetilde{\Wh}(e,\theta)$ is left adjoint to $\GF$.

The following theorem is the main result of the paper.

\begin{Thm}\label{Thm_main}
There is an equivalence $\KF:\widetilde{\Wh}(e,\theta)\rightarrow
\widetilde{\Ocat}(\theta)$ of abelian categories and an isomorphism $\Psi:\underline{\Walg}\rightarrow
\Walg^0$ satisfying the
following conditions:
\begin{enumerate}
\item $\Ann_{\Walg}(\KF(M))^{\dagger}=\Ann_{\U}(M)$ for any $M\in \Wh(e,\theta)$.
\item $\KF$ maps $\widetilde{\Wh}^{\t_0}(e,\theta)$ to
$\widetilde{\Ocat}^{\t_0}(\theta)$, and $\Wh(e,\theta)$ to $\Ocat(\theta)$.
\item The functors $\Psi^*\circ\FF\circ \KF$ and $\GF$ from $\widetilde{\Wh}(e,\theta)$
to $\underline{\Walg}$-$\Mod$ are isomorphic. Here $\Psi^*$ denotes the pull-back functor
between the categories of modules induced by $\Psi$.
\item The functors $\KF\circ\Verm^{e,\theta},\Verm^\theta\circ \Psi^{-1*}$ from $\underline{\Walg}\text{-}\Mod$
to $\widetilde{\Ocat}(\theta)$ are isomorphic.
\end{enumerate}
\end{Thm}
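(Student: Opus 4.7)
\emph{Plan.} I would follow the template of the proof of the Skryabin equivalence in \cite{Wquant}, Subsection 3.3, but worked out in the relative setting imposed by the parabolic grading from $\theta$. The key technical device is a topological-algebra isomorphism obtained by combining two applications of Proposition \ref{Prop:1}: one for the pair $(\g,e)$ and one for the pair $(\l,e)$. To make these compatible with the $\ad\theta$-grading, I would choose at the outset an $\ad\theta$-stable Lagrangian $l\subset\g(-1)$ of the form $l=\underline{l}\oplus\g(-1)_{>0}$, where $\underline{l}\subset\l(-1)=\g(-1)_0$ is Lagrangian; this is possible because $\omega_\chi$ pairs $\g(-1)_{>0}$ with $\g(-1)_{<0}$ non-degenerately and is symplectic on $\g(-1)_0$ separately.

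\emph{Step 1 (the isomorphism $\Psi$).} Proposition \ref{Prop:1} applied to $(\l,e)$ produces an isomorphism $\underline{\Phi}_\hbar:\K[\l^*]^\wedge_\chi[[\hbar]]\to \K[\underline{S}]^\wedge_\chi[[\hbar]]\widehat{\otimes}_{\K[[\hbar]]}\K[\underline{V}^*]^\wedge_0[[\hbar]]$. Applied to $(\g,e)$ it produces $\Phi_\hbar$, and the $V$-factor decomposes $\ad\theta$-equivariantly as $\underline{V}\oplus V_{>0}\oplus V_{<0}$, with $V_{>0}$ and $V_{<0}$ paired non-degenerately by $\omega$. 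Matching the $\ad\theta$-weight-zero parts of both decompositions, specialising $\hbar=1$, and taking $\ad\underline{\m}$-invariants, I would extract an algebra homomorphism $\underline{\Walg}\to\Walg_{\geqslant 0}$; composing with $\Walg_{\geqslant 0}\twoheadrightarrow \Walg^0$ yields $\Psi$. One also checks that this $\Psi$ coincides with the isomorphism of Remark \ref{Rem:2.1}.

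\emph{Step 2 (the functor $\KF$).} For $M\in\widetilde{\Wh}(e,\theta)$, local nilpotency of $\widetilde{\m}_\chi$ lets me regard $M$ as a module over a natural $\widetilde{\m}_\chi$-adic completion of $\U$. Via the combination of the two applications of Proposition \ref{Prop:1}, this completion acquires the structure of a completed tensor product of a suitable completion of $\Walg$ with a transverse Weyl algebra $\W_{V_{\neq 0}}$ quantising $V_{>0}\oplus V_{<0}$. The Stone--von Neumann theorem provides a unique simple $\W_{V_{\neq 0}}$-module $F$ annihilated by the Lagrangian $\g_{>0}\subset V_{>0}\oplus V_{<0}$, and I set $\KF(M):=\Hom_{\W_{V_{\neq 0}}}(F,M)$, equipped with its canonical $\Walg$-action. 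A quasi-inverse is the parabolic Skryabin functor $N\mapsto(\U/\U\widetilde{\m}_\chi)\otimes_{\Walg} N$, directly mirroring the inverse $\Sk$ of \cite{Premet1}. Conditions (2)--(4) of the theorem then fall out by construction: (3) is built in, since $\GF(M)$ is precisely the $\Walg_{>0}$-invariants in $\Hom_{\W_{V_{\neq 0}}}(F,M)$ once $\W_{V_{\neq 0}}$ and $\Walg$ are separated; (4) follows because both $\Verm^{e,\theta}(V)$ and $\Verm^\theta(\Psi^{-1*}V)$ arise by inducing the same datum through compatible parabolic subalgebras; (2) is an immediate consequence of (3).

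\emph{Step 3 (annihilators and main obstacle).} Condition (1) is handled by reproducing the argument of \cite{Wquant}, Subsection 3.4, which shows that the $\dagger$-construction is compatible with the topological isomorphism because the ideal-closure operations commute with the extra Weyl-algebra factor. The step I expect to be the genuine obstacle is setting up the parabolic completion in Step 2: the subalgebra $\widetilde{\m}$ is neither contained in nor contains $\m$, so one cannot simply import the $\m_\chi$-adic completion used in the plain Skryabin case. Instead the $\widetilde{\m}_\chi$-adic completion must be shown to factor cleanly along the $\ad\theta$-grading, which is what forces the specific Lagrangian choice above, and which requires checking that the star-product on $\K[\g^*]$ respects the two gradings (by $\ad h$ and by $\ad\theta$) in a manner that survives the passage to completions.
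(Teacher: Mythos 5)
There is a genuine gap in your Step 2. The claimed factorization of the $\widetilde{\m}_\chi$-adic completion of $\U$ as (a completion of $\Walg$)$\,\widehat{\otimes}\,\W_{V_{\neq 0}}$ with $V_{\neq 0}=V_{>0}\oplus V_{<0}$ cannot hold: the decomposition theorem (Proposition \ref{Prop:1}) separates $\U$ into $\Walg$ and the Weyl algebra of the \emph{whole} symplectic space $V=[\g,f]$, and $\dim \z_\g(e)+\dim V_{\neq 0}<\dim\g$ whenever $V_0:=V\cap\l=[\l,f]\neq 0$ (i.e.\ whenever $e\neq 0$ in $\l$). The commutant of $\W_{V_{\neq 0}}$ in the completed algebra is a completion of $\W_{V_0}\otimes\Walg$, not of $\Walg$. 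Consequently your functor $\KF(M)=\Hom_{\W_{V_{\neq 0}}}(F,M)$ carries a residual $\W_{V_0}$-action with $\m_0=\underline{\m}_\chi\cap V_0$ acting locally nilpotently; as a $\Walg$-module it is (up to the Heisenberg equivalence for $V_0$) an infinite multiple of the correct object, so it neither lands in $\widetilde{\Ocat}(\theta)$ nor gives an equivalence. The paper's functor takes invariants with respect to the \emph{full} Lagrangian $\widetilde{\m}\cap V=(V\cap\g_{>0})\oplus\m_0$ inside $V$, i.e.\ it also kills the $\m_0$-part; this is the essential point your construction misses. Relatedly, the "main obstacle" you identify ($\widetilde{\m}$ neither contains nor is contained in $\m$) is resolved in the paper not by comparing the $\ad h$- and $\ad\theta$-gradings on the star-product, but by replacing the Kazhdan grading by the one attached to $h-m\theta$ for $m\gg 0$, after which $\widetilde{\m}_\chi$ satisfies $\vf_{>0}\subset\m\subset\vf_{\geqslant 0}$ and the general completion machinery of \cite{Wquant}, Subsection 3.2 (Proposition \ref{Prop:7.2}, Corollary \ref{Cor:7.3}) applies verbatim; without some such device your completion step does not get off the ground.

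Your Step 1 is also shakier than you suggest. The $\ad\theta$-weight-zero part of $\K[\g^*]^\wedge_\chi[[\hbar]]$ (equivalently, its $T_0$-invariants) is much larger than the corresponding object for $\l$ -- it contains products of opposite-weight elements -- so "matching the weight-zero parts" of two independent applications of Proposition \ref{Prop:1} does not by itself produce a homomorphism $\underline{\Walg}\to\Walg_{\geqslant 0}$, and there is no reason the two separately chosen isomorphisms are compatible. In the paper, $\Psi$ is extracted from the \emph{single} isomorphism $\Phi$ for $(\g,e)$ by passing to the degree-zero quotients $\A^0=\A_{\geqslant 0}/\A^+_{\geqslant 0}$ and their completions (formula (\ref{eq:Psi})); the decomposition for $(\l,e)$ enters only afterwards, in Remark \ref{Rem_two_isomorphisms}, to compare $\Psi$ with the Brundan--Goodwin--Kleshchev isomorphism, and even there only the induced shift on $\t_0$ is computed, so your claim that your $\Psi$ "coincides" with the isomorphism of Remark \ref{Rem:2.1} is unsubstantiated (and not needed for the theorem). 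Finally, for condition (1) you assert compatibility of the $\dagger$-construction with the extra Weyl factor, but the delicate point in Lemma \ref{Lem:7.4} is precisely the equality $\I^\wedge\cap\Walg=\I$, which is proved using that $\I$ is the annihilator of a module in $\widetilde{\Ocat}(\theta)$ on which the completion acts; this hypothesis does not appear in your sketch.
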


In particular, $\KF$ induces an equivalence of abelian categories
$\Wh(e,\theta)\rightarrow \Ocat(\theta)$. Moreover,  for any
irreducible finite dimensional $\underline{\Walg}$-module $V$ the
$\U$-module $\Verm^{e,\theta}(V)$ has a unique simple quotient
$L^{e,\theta}(V)$. The last claim follows either from the
theorem above or can be proved in the same way as an analogous
statement in \cite{MS}, Proposition 2.1.

Now we consider an important special case. Till the end of the
section we assume that $e$ is regular in $\l$. The results below in this section will not be used in the proof
of Theorem \ref{Thm_main}.

The following (quite standard) proposition shows that the category
$\Wh(e,\theta)$ coincides with the category considered in
\cite{MS},\cite{Backelin}.

\begin{Prop}
Let $M\in \widetilde{\Wh}(e,\theta)$, where $e$ is regular in $\l$.
Then the following two conditions are equivalent:
\begin{enumerate}
\item $\dim \GF(M)<\infty$.
\item The action of the center $\Centr$ of $\U$ on $M$ is locally finite.
\end{enumerate}
\end{Prop}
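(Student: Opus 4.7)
The plan is to transport the problem to the W-algebra side via Theorem \ref{Thm_main} and there exploit the commutativity of $\Walg^0\cong\underline{\Walg}$ that Kostant's theorem gives when $e$ is regular in $\l$. Set $N:=\KF(M)\in\widetilde{\Ocat}(\theta)$. By item (3) of Theorem \ref{Thm_main}, $\GF(M)\cong\Psi^*\FF(N)$ as $\underline{\Walg}$-modules, so condition (1) becomes $N\in\Ocat(\theta)$. Using the identification $\Centr\cong Z(\Walg)$ recalled in Section \ref{SECTION_Walg}, which by naturality of the constructions is compatible with $\KF$, condition (2) becomes: $Z(\Walg)$ acts locally finitely on $N$. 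The regularity hypothesis enters through Kostant's theorem, which gives $\underline{\Walg}\cong Z(\underline{\U})$; via $\Psi$, $\Walg^0$ is commutative and finitely generated over $\K$, and the parabolic Harish-Chandra map $\Centr\hookrightarrow Z(\underline{\U})$ realizes $\Walg^0$ as a finitely generated module over $Z(\Walg)=\Centr$.

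For $(1)\Rightarrow(2)$, Corollary \ref{Cor:2.6} gives that $N\in\Ocat(\theta)$ has finite length, with simple subquotients of the form $L^\theta(V)$ for finite-dimensional simple $\Walg^0$-modules $V$. Commutativity of $\Walg^0$ forces each such $V$ to be one-dimensional, and the image of $Z(\Walg)\subset\Walg_0$ in $\Walg^0$ acts on $V$ through a character $\chi_V$. Since $Z(\Walg)$ is central in $\Walg$, this character extends unchanged to $\Verm^\theta(V)=\Walg\otimes_{\Walg_{\geqslant 0}}V$ and hence to its simple quotient $L^\theta(V)$. Thus $Z(\Walg)$ acts locally finitely on each composition factor of $N$, and therefore on $N$.

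For $(2)\Rightarrow(1)$, Noetherianity of $N$ together with local finiteness of $Z(\Walg)$ yields a finite decomposition of $N$ by generalized central characters, reducing us to the case where $Z(\Walg)$ acts through a finite-dimensional quotient $Z(\Walg)/\I^k$. Since $\Walg^0$ is finite over $Z(\Walg)$, it then acts on $\FF(N)=N^{\Walg_{>0}}$ through a finite-dimensional commutative quotient $A$, and $\dim_\K\FF(N)<\infty$ will follow once $\FF(N)$ is shown to be finitely generated as $\Walg^0$-module.

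The finite-generation step above is what I expect to be the main obstacle: a priori the $\Walg_{>0}$-invariants of a finitely generated object of $\widetilde{\Ocat}(\theta)$ need not be finitely generated over $\Walg^0$. The plan to handle it is to exploit a triangular decomposition $\Walg=\Walg_{<0}\cdot\Walg^0\cdot\Walg_{>0}$ of \cite{BGK} together with condition $(\ast)$: choose $\Walg$-generators $n_1,\dots,n_r$ of $N$ with $\Walg_{>\alpha_i}n_i=0$, observe that suitable products $\prod_j f_j^{k_j}n_i$ (with $k_j$ maximal so the product is still nonzero) land in $\FF(N)$, and argue by descending induction on the $\theta$-weight that the $\Walg^0$-submodule generated by these finitely many elements exhausts $\FF(N)$; the key point is that $\Walg_{>0}$ acts \emph{locally nilpotently} (not just locally finitely), so the induction terminates in finitely many steps.
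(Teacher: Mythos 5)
Your strategy hinges on transporting condition (2) across the equivalence $\KF$, and this is where the first real gap sits: you need to know that the identification $\Centr\cong Z(\Walg)$ recalled in Section \ref{SECTION_Walg} intertwines the $\Centr$-action on $M$ with the $Z(\Walg)$-action on $N=\KF(M)$, and you assert this ``by naturality''. It is not part of Theorem \ref{Thm_main}: the only statement there relating the two sides at the level of annihilators, item (1), is asserted for $M\in\Wh(e,\theta)$, i.e.\ only after finite type is known, which is precisely what you are trying to prove in $(2)\Rightarrow(1)$. Nor is the compatibility automatic: $\KF$ is built from the completion isomorphism $\Phi$, and Remark \ref{Rem_two_isomorphisms} shows that the analogous naive compatibility already fails for the embeddings of $\t_0$ (a shift appears). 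The same caveat applies to your identification of the composite $\Centr\rightarrow\Walg^0\cong\underline{\Walg}=Z(\underline{\U})$ with the parabolic Harish-Chandra homomorphism, which you need to make $\Walg^0$ finite over $\Centr$. The paper's proof avoids all of this by never leaving the $\U$-side: for $(1)\Rightarrow(2)$ it uses Theorem \ref{Thm_main} and Corollary \ref{Cor:2.6} only to get finite length, reduces to irreducible $M$, and observes that $\GF(M)$ is a finite-dimensional, $\Centr$-stable subspace generating $M$, so centrality propagates local finiteness; your version of this direction is repairable along those lines, but as written it leans on the unproved compatibility.

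The larger gap is the one you flag yourself: finite generation of $\FF(N)$ over $\Walg^0$ for $N\in\widetilde{\Ocat}(\theta)$. Your sketch does not establish it. There is no triangular decomposition of the form $\Walg=\Walg_{<0}\cdot\Walg^0\cdot\Walg_{>0}$ with $\Walg^0$ a subalgebra ($\Walg^0$ is a quotient of $\Walg_{\geqslant 0}$); an element $\prod_j f_j^{k_j}n_i$ with the $k_j$ maximal subject to nonvanishing need not be annihilated by all of $\Walg_{>0}$ (nor even by each $f_j$, since the $f_j$ do not commute); and the proposed descending induction on the $\theta$-weight has no reason to terminate, because while the weights of $N$ are bounded above (Lemma \ref{Lem:2.4}), Whittaker vectors could a priori occur in arbitrarily low weights with infinite-dimensional weight spaces --- ruling this out is essentially the content of the proposition, not a formal consequence of local nilpotence. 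The paper closes exactly this hole by importing Milicic--Soergel's Theorem 2.6: condition (2) forces $M$ to have finite length (this is where regularity of $e$ in $\l$ is used), after which $(2)\Rightarrow(1)$ follows from a short weight argument for irreducible $M$: if $u,v\in\GF(M)$ are $\theta$-eigenvectors with eigenvalues $\alpha\not\preceq\beta$, then $u\notin\U v$ since $v$ is killed by $\g_{>0}$ and all eigenvalues in $\U v$ are $\preceq\beta$, contradicting irreducibility. Without a substitute for that finite length theorem (or some other genuinely new input at the finite-generation step), your $(2)\Rightarrow(1)$ does not close.
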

\begin{proof}
$(1)\Rightarrow (2)$. Thanks to Theorem \ref{Thm_main} and Corollary \ref{Cor:2.6}, we may assume that
$M$ is irreducible. Thence the natural homomorphism $\Verm^{e,\theta}(\GF(M))\rightarrow M$
is surjective. Note that $\GF(M)$ is $\Centr$-stable. Since $\dim \GF(M)<\infty$, the $\Centr$-action
 on $\GF(M)$ is locally finite.  But $\GF(M)$ generates $M$ hence the $\Centr$-action on $M$
  is locally finite.

$(2)\Rightarrow (1)$. According to \cite{MS}, Theorem 2.6, $M$ has finite length. So we may assume that
$M$ is irreducible. Let $u,v\in \GF(M)$ be eigenvectors for $\theta$
with eigenvalues $\alpha,\beta$. We may assume that $\alpha\not\preceq\beta$. However, all
eigenvalues in $\U v$ are $\preceq \beta$, for $v$ is annihilated by $\g_{>0}$. So $u\not \in\U v$, contradiction.
\end{proof}

Clearly, $\widetilde{\m}$ is a
maximal subalgebra of $\g$ consisting of nilpotent elements. Let
$\b:=\n_\g(\widetilde{\m})$ be the corresponding Borel subalgebra,
$\h\subset\b$ a Cartan subalgebra. Let $\Delta,\Delta_+,\Pi$ be,
respectively, the root system and the sets of positive roots and of
simple roots corresponding to $(\b,\h)$. Let $\Pi'\subset \Pi$
consist of all simple roots $\alpha$ such that
$\chi|_{\g_\alpha}\neq 0$. Then $\Pi'$ is the system of simple roots
in $\l$. Conjugating $\chi$ by an element of $B\cap L$, where $B$ is the Borel
subgroup of $G$
corresponding to $\b$, if necessary, we may assume that $\chi|_{\g_\alpha}=0$
for $\alpha\in \Delta_+\setminus\Pi'$.  Finally, let $W'\subset W$ be the Weyl group of $\l$.
As Kostant proved in \cite{Kostant},
$\underline{\Walg}$ is identified with the center of  $\underline{\U}$. So any irreducible
$\underline{\Walg}$-module is one-dimensional. These irreducible modules are parametrized
by $W'$-orbits in $\h^*$ for the action given by $w\cdot
\lambda=w(\lambda+\rho)-\rho$, where, as usual,
$\rho=\frac{1}{2}\sum_{\alpha\in \Delta_+}\alpha$. So we get "Verma
modules" $\Verm^{e,\theta}(\lambda)$ with
$\Verm^{e,\theta}(\lambda_1)=\Verm^{e,\theta}(\lambda_2)$ iff
$\lambda_1,\lambda_2$ are $W'$-conjugate, and their simple quotients
$L^{e,\theta}(\lambda)$. As Milicic and Soergel checked in
\cite{MS}, Proposition 2.1, Theorem 2.6, any simple module in
$\widetilde{\Wh}(e,\theta)$ is isomorphic to
$L^{e,\theta}(\lambda)$, and $L^{e,\theta}(\lambda_1)\cong
L^{e,\theta}(\lambda_2)$ iff $\lambda_1,\lambda_2$ are
$W'$-conjugate.

Since any module in $\widetilde{\Wh}(e,\theta)$ (in particular,
$\Verm^{e,\theta}(\lambda)$) has finite length, the multiplicity
$[\Verm^{e,\theta}(\lambda):L^{e,\theta}(\mu)]$ is defined. Theorem 6.2
in \cite{Backelin} reduces the computation of this multiplicity to a
similar problem in the usual BGG category $\Ocat$. Let
$M(\lambda),L(\lambda)$ denote the Verma module and the irreducible
module with highest weight $\lambda\in \h^*$.

\begin{Thm}[Backelin]\label{Thm:4.2}
Let $\lambda,\mu\in \h^*$. If
\begin{enumerate}\item $\mu$ and $\lambda$ are $W$-conjugate
(w.r.t. the $\cdot$-action),
\item  and there is $w\in W'$ such that
 $w\cdot \mu$ is antidominant
for $\l$ (i.e., $\langle w\cdot\mu+\rho,\alpha^\vee\rangle\not\in \Z_{>
0}$ for any $\alpha\in \Delta_+$ with
$\langle\alpha,\theta\rangle=0$), and $\lambda-w\cdot \mu\in \Span_{\Z_{\geqslant 0}}(\Delta^+)$,
\end{enumerate}
then
 $[\Verm^{e,\theta}(\lambda):
L^{e,\theta}(\mu)]=[M(\lambda):L(w\cdot\mu)]$. Otherwise,
$[\Verm^{e,\theta}(\lambda): L^{e,\theta}(\mu)]=0$.
\end{Thm}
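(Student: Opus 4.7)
The plan is to pass to the W-algebra side via the equivalence $\KF$ of Theorem \ref{Thm_main}. Since $e$ is regular in $\l$, Kostant's theorem identifies $\underline{\Walg}$ with $Z(\underline{\U})$, so its irreducible modules are one-dimensional and parametrized by $W'$-orbits on $\h^*$. Writing $V_\nu$ for the one-dimensional $\Walg^0$-module attached via $\Psi^{-1}$ to $W'\cdot\nu$, parts (2) and (4) of Theorem \ref{Thm_main} identify $\Verm^{e,\theta}(\nu)$ with $\Verm^\theta(V_\nu)$ and $L^{e,\theta}(\nu)$ with $L^\theta(V_\nu)$, so the multiplicity in question becomes $[\Verm^\theta(V_\lambda):L^\theta(V_\mu)]$ in $\Ocat(\theta)$.

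The strategy is to construct an exact functor $\Pi:\Ocat\to\Ocat(\theta)$, essentially Backelin's coinvariants functor composed with $\KF^{-1}$, satisfying
\begin{equation*}
\Pi(M(\nu))=\Verm^\theta(V_\nu) \quad\text{and}\quad \Pi(L(\nu))=\begin{cases} L^\theta(V_\nu) & \text{if $\nu$ is antidominant for $\l$},\\ 0 & \text{otherwise.}\end{cases}
\end{equation*}
The functor is built as the composition of three steps: take $\g_{>0}$-coinvariants of an object of $\Ocat$ to land in a suitable category of $\l$-modules; apply the $\l$-Skryabin functor $N\mapsto N^{\underline{\m}_\chi}$ to produce an $\underline{\Walg}$-module; and finally apply $\Verm^\theta\circ\Psi$. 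Granted these properties, applying $\Pi$ to a composition series of $M(\lambda)$ and counting factors yields the positive half of the formula. The vanishing half follows because (i) the central-character constraint $\mu\in W\cdot\lambda$ is forced by the embedding $Z(\g)\hookrightarrow\Walg$ from Section \ref{SECTION_Walg} acting on $\Verm^\theta(V_\lambda)$ by $\chi_\lambda$, and (ii) the condition $\lambda-w\cdot\mu\in\Span_{\Z_{\geqslant 0}}(\Delta^+)$ is exactly the BGG-order constraint needed for $L(w\cdot\mu)$ to appear in $M(\lambda)$ over $\g$, which transports to $\Pi$ via the identification $\Pi(L(w\cdot\mu))=L^\theta(V_\mu)$.

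The main obstacle is the joint verification of exactness of $\Pi$ and of the simple-preservation dichotomy. Naive parabolic coinvariants are only right exact; genuine exactness requires that parabolic restriction of an object of $\Ocat$ decomposes cleanly along generalized $Z(\underline{\U})$-eigenspaces and that, on each such block, Kostant's equivalence between $\l$-Whittaker modules and $Z(\underline{\U})$-modules is itself exact. The dichotomy on simples is equally delicate: if $\nu$ is not antidominant for $\l$ under the dot action, one must show that the $\g_{>0}$-coinvariants of $L(\nu)$ admit no Whittaker vector for $\underline{\m}_\chi$; in the antidominant case one must match the resulting one-dimensional $\underline{\Walg}$-module with $V_\nu$ under the Kostant identification. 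Carrying out these verifications is essentially the content of \cite{Backelin} and rests on Soergel-style reciprocity between projective objects in $\Ocat$ and projective Whittaker modules, together with a careful analysis of socle filtrations of parabolic restrictions.
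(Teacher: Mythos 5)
First, note that the paper itself contains no proof of this statement: it is Backelin's Theorem 6.2, imported verbatim (the attribution ``[Backelin]'' in the theorem header is the paper's entire argument). So the only meaningful comparison is with Backelin's proof, which your sketch is implicitly trying to reconstruct — and there it has a genuine gap.

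The three-step functor you propose is identically zero. For $M\in\Ocat$ the space $M/\g_{>0}M$ is still an $\h$-semisimple module (it is a quotient of a weight module by a weight-stable submodule), while $\underline{\m}$ is spanned by the root vectors $\g_\alpha$ with $\alpha\in\Delta_+$, $\langle\alpha,\theta\rangle=0$, and $\chi$ is nonzero on every simple root space $\g_\alpha$, $\alpha\in\Pi'$ (nondegenerate, since $e$ is regular in $\l$). A nonzero vector $v=\sum_\nu v_\nu$ (a finite sum of weight vectors) cannot satisfy $x_\alpha v=\langle\chi,x_\alpha\rangle v$: choosing a weight $\nu$ in the support of $v$ with $v_{\nu-\alpha}=0$ and comparing components gives $0=x_\alpha v_{\nu-\alpha}=\langle\chi,x_\alpha\rangle v_\nu\neq 0$. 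This is precisely the standard fact that modules in the BGG category $\Ocat$ admit no nonzero Whittaker vectors, and it is exactly why Backelin (following Milicic--Soergel) does not apply ``coinvariants, then the Skryabin functor'' but first completes: his functor sends $M$ to the locally $\widetilde{\m}_\chi$-finite part of the completion $\overline{M}=\prod_\nu M_\nu$, and the real content of the theorem is that this completion functor is exact on $\Ocat$, sends $M(\nu)$ to $\Verm^{e,\theta}(\nu)$, and kills $L(\nu)$ unless $\nu$ is $\l$-antidominant. Your proposal both omits the completion (so the construction collapses at step two) and then explicitly defers exactness and the dichotomy on simples to \cite{Backelin}, so even after repairing the functor it would not constitute an independent proof — though in fairness that matches the paper, which simply cites Backelin. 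Two minor points: the detour through Theorem \ref{Thm_main} into $\Ocat(\theta)$ is unnecessary, since the statement lives entirely in $\widetilde{\Wh}(e,\theta)$ (it is harmless, as the results of that section are not used in the proof of Theorem \ref{Thm_main}); and a functor from $\Ocat$ to $\Ocat(\theta)$ would be Backelin's functor composed with $\KF$, not with $\KF^{-1}$.
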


Thanks to Theorem \ref{Thm_main}, Theorem \ref{Thm:4.2} allows to compute the decomposition numbers for the
category $\Ocat(\theta)$.

\section{Proof of the main theorem}\label{SECTION_proof}
The proof of Theorem \ref{Thm_main} is based on a construction of completions
from \cite{Wquant}, Subsection 3.2. Let us recall this construction here.

Let $\vf$ be a finite dimensional graded vector space,
$\vf=\bigoplus_{i\in \Z} \vf(i)$, $\vf\neq\vf(0)$, and $A:=S(\vf)$.
Suppose also that a torus $T_0$ acts on $\vf$ preserving the grading.
The grading on $\vf$ gives rise to the grading $A=\bigoplus_{i\in
\Z}A(i)$ whence to the action $\K^\times:A$. Let $*:A\otimes A\rightarrow A[\hbar^2]$ be a
$T_0$-invariant homogeneous star-product, $f*g=\sum_{i=0}^\infty
D_i(f,g)\hbar^{2i}$. Suppose $D_i:A\otimes A\rightarrow A$ is a
bidifferential operator  of order at most $i$ at each variable. Then
we can form the associative product $\circ:A\times A\rightarrow A,
f\circ g=\sum_{i=0}^\infty D_i(f,g)$. We denote $A$ equipped with
the corresponding algebra structure by $\A$.

We have an action of $T_0$ on $\A$ by algebra automorphisms. Suppose that we have an embedding
$\t_0\hookrightarrow \A$ such that the differential of the $T_0$-action on $\A$
coincides with the adjoint action of $\t_0$.

For $u,v\in \vf(1)$ denote by $\omega_1(u,v)$ the constant term of
$u\circ v-v\circ u$. Choose a $T_0$-invariant maximal isotropic (w.r.t. $\omega_1$)
subspace $\y\subset \vf(1)$ and set $\m:=\y\oplus
\bigoplus_{i\leqslant 0}\vf(i)$. Further, choose a homogeneous  basis
$v_1,\ldots,v_n$ of $\vf$  such that $v_1,\ldots,v_m$ form a basis
in $\m$. Let $d_i$ denote the degree of $v_i$. We may assume that the sequence $d_1,\ldots,d_m$
is increasing and all vectors $v_i$ are $T_0$-semiinvariant.

By $A^\heartsuit$ we denote the subalgebra of the formal power
series algebra $\K[[\vf^*]]$ consisting of all formal power series
of the form $\sum_{i<n}f_i$ for some $n$, where $f_i$ is a
homogeneous power series of degree $i$. For any $f,g\in
A^\heartsuit$ we have the well-defined element $f\circ
g:=\sum_{i=0}^\infty D_i(f,g)\in A^\heartsuit$. The space
$A^\heartsuit$ considered as an algebra w.r.t. $\circ$ is denoted by $\A^\heartsuit$.
Any element   $a\in\A^\heartsuit$
can be written in a unique way as an infinite linear combination  $\widetilde{a}$
of monomials
\begin{equation}\label{eq:7.1} v_{i_1}\circ\ldots \circ v_{i_l} \text{ with } i_1\geqslant
i_2\geqslant \ldots\geqslant i_l\end{equation} such that
$\sum_{j=1}^ld_{i_j}\leqslant c$, where $c$ depends on $a$. Let $\F_c\A$ denote
the subspace consisting of all elements, where degrees of monomials are bounded by $c$.
Then the subspaces $\F_c\A^\heartsuit$ form a filtration of $\A^\heartsuit$.

 Pick $\theta\in \X^*(T_0)\subset\t_0$. Let $\vf_{\geqslant 0},\vf_{>0}$ denote the sums of $\ad\theta$-eigenspaces corresponding, respectively, to nonnegative and positive eigenvalues. Let $\A_{\geqslant 0},\A_{>0},\A^\heartsuit_{\geqslant 0},
\A^\heartsuit_{>0}$ be defined analogously (although the action of $\ad\theta$ on $\A^\heartsuit$ is not diagonalizable, the last definition makes sense).
Suppose that the eigenvalues of $v_1,\ldots,v_n$ are decreasing, and $\vf_{>0}\subset \m\subset \vf_{\geqslant 0}$.
Then $\A_{\geqslant 0}^+:=\A_{\geqslant 0}\cap
\A\A_{>0}, \A_{\geqslant 0}^{\heartsuit+}:=\A^\heartsuit_{\geqslant 0}\cap
\A^\heartsuit\A^\heartsuit_{>0}$ are two-sided ideals in $\A_{\geqslant 0},\A^\heartsuit_{\geqslant 0}$.
Set $\A^0:=\A_{\geqslant 0}/\A_{\geqslant 0}^+, \A^{\heartsuit 0}:= \A_{\geqslant 0}^\heartsuit/\A_{\geqslant 0}^{\heartsuit +}$. Note that there is a natural inclusion $\A^0\hookrightarrow \A^{\heartsuit 0}$.

Clearly, an element of $\A^0$ (resp., of $\A^{\heartsuit 0}$) may be thought as a finite (resp., infinite
with finiteness condition stated after (\ref{eq:7.1})) sum of monomials  in $v_i\in \v_0$. Also let us note that $\A^0$ is obtained
from $S(\v_0)$ in the same way as $\A$ is obtained from $S(\v)$ (i.e., using a star-product with properties
listed in the beginning of the section). So we can construct the algebra $\A^{0\heartsuit}$. However, it is clear
that $\A^{0\heartsuit}=\A^{\heartsuit 0}$.

Consider the space $\A^\wedge:=\varprojlim \A/\A\m^k$. It follows from \cite{Wquant} (Lemma 3.2.8
and the discussion before it) that $\A^\wedge$ has a natural structure of a topological algebra
such that the natural map $\A\rightarrow \A^\wedge$ is an algebra homomorphism. Moreover,
this map is injective and extends to an injective homomorphism of algebras
$\A^\heartsuit\rightarrow \A^\wedge$. The algebra $\A^\wedge$ consists of all infinite sums of
monomials (\ref{eq:7.1}) satisfying the following condition:

 for any given $j\geqslant 0$ there are only finitely many monomials
with nonzero coefficients and $v_{i_{l-j}}\not\in\m$.

Furthermore, we can compare algebras of the form $\A^\wedge$ for two different star-products.
The following result follows from \cite{Wquant}, Lemmas 3.2.8,3.2.9.

\begin{Prop}\label{Prop:7.2}
Let $\vf,A$ be as in the beginning of this section and $*,*'$ be two $*$-products on $A$
satisfying the above conditions. So we get new products $\circ,\circ'$ on $A$, the corresponding algebras
will be denoted by $\A,\A'$. Suppose there is a $T_0$-stable subspace $\y\subset\vf(1)$  that
is maximal isotropic
for both skew-symmetric forms.  Finally, suppose that any element in $A$ can be represented as a
finite sum of monomials (\ref{eq:7.1}) and also as a finite sum of analogous monomials for $\circ'$. Suppose there
is a homogeneous $T_0$-equivariant  isomorphism $\Phi:\A^\heartsuit\rightarrow \A'^\heartsuit$ such that $\Phi(v_i)-v_i\in
\F_{d_i-2}\A+(\F_{d_i}\A\cap \vf^2 A)$. Then $\Phi$ extends uniquely
to a topological algebra isomorphism $\Phi:\A^\wedge\rightarrow \A'^\wedge$ with $\Phi(\A^\wedge\m)=\A'^\wedge\m$.
\end{Prop}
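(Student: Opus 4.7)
The plan is to extend $\Phi$ to the completions by showing that, under the hypotheses, $\Phi$ is continuous with respect to the $\m$-adic topologies and hence passes to the inverse limits. Everything takes place inside the PBW-type presentations of $\A^\wedge$ and $\A'^\wedge$ recalled just before the statement: any element is an infinite sum of ordered monomials $v_{i_1}\circ\cdots\circ v_{i_l}$ with $i_1\geqslant \cdots\geqslant i_l$ subject to the stated finiteness condition on the number of trailing factors outside $\m$, and similarly with $\circ'$ for $\A'^\wedge$. Since $\y$ is a common maximal isotropic for both $\omega_1$ and $\omega_1'$, the subspace $\m$ is the same for the two products, so the two completions are modeled on the same set of monomials but with different multiplications.

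First I would unpack the hypothesis $\Phi(v_i)-v_i\in \F_{d_i-2}\A+(\F_{d_i}\A\cap \vf^2 A)$. Rewriting the quadratic part via the PBW expansion for $\circ'$ produces at worst quadratic monomials of total degree $\leqslant d_i$ (modulo lower filtration, which is harmless by induction), so $\Phi(v_i)$ agrees with $v_i$ to leading $\circ'$-PBW order. By induction on the length of monomials in $\F_c\A$, this implies $\Phi(\F_c\A^\heartsuit)\subset \F_c\A'^\heartsuit$ and that the induced maps on associated graded pieces are the identity; in particular $\Phi$ is a filtered isomorphism. This is essentially Lemma 3.2.8 of \cite{Wquant}.

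Next I would show continuity. Writing any element of $\A\m^k$ as a sum of monomials (\ref{eq:7.1}) whose last $k$ factors belong to $\m$, applying $\Phi$ and re-PBW'ing with respect to $\circ'$ replaces each $v_{i_j}\in\m$ by $v_{i_j}$ plus terms in $\F_{d_{i_j}-2}\A'+\vf^2 A'$, and the latter, after reordering, still have at least $k$ trailing factors in $\m$ modulo an element of $\A'\m^{k+1}$. Iterating this bookkeeping gives $\Phi(\A\m^k)\subset \A'\m^k + \A'\m^{k+1}+\cdots$, which in the $\m$-adic topology means $\Phi$ is continuous and sends $\m$-adic Cauchy sequences to $\m$-adic Cauchy sequences. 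Hence $\Phi$ has a unique continuous extension $\A^\wedge\to\A'^\wedge$, and the same argument applied to $\Phi^{-1}$ (which satisfies the symmetric hypothesis) gives continuity of the inverse; this is Lemma 3.2.9 of \cite{Wquant}. The equality $\Phi(\A^\wedge\m)=\A'^\wedge\m$ then follows from $\Phi(v_i)\in v_i+\A'^\wedge\m$ for $v_i\in\m$ together with continuity.

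The main obstacle is the combinatorial bookkeeping in the second step: the correction terms $\vf^2 A$ in the hypothesis have degree controlled only by the filtration $\F_{d_i}$, and after reordering via the $\circ'$-PBW basis some trailing factors can a priori leave $\m$. The point is that such reorderings introduce commutators which, by property (3) of the star-product, strictly decrease the total filtration degree, allowing the induction to close. Once this is in place, the rest of the argument is formal passage to the inverse limit.
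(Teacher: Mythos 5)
Your proposal takes essentially the same route as the paper: the paper's own proof of this proposition consists precisely of the citation to \cite{Wquant}, Lemmas 3.2.8 and 3.2.9, and your two steps --- the monomial description of $\A^\wedge$ and the continuity of $\Phi$ (and of $\Phi^{-1}$) with respect to the topology defined by the left ideals $\A\m^k$, followed by passage to the inverse limit --- are exactly the content of those lemmas, which you identify correctly. The combinatorial core (that corrections lying in $\F_{d_i-2}\A+(\F_{d_i}\A\cap \vf^2 A)$ do not destroy the trailing-factors-in-$\m$ condition after reordering) is asserted rather than carried out in your sketch, but the paper defers exactly this point to \cite{Wquant} as well.
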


Clearly, $\Phi$ induces an isomorphism $\A^{0\heartsuit}=\A^{\heartsuit 0}\rightarrow \A'^{\heartsuit 0}=\A'^{0\heartsuit}$, which is  denoted by $\Phi^0$. This isomorphism is extended
to an isomorphism $\Phi^0:\A^{0\wedge}\rightarrow \A'^{0\wedge}$, where $\A^{0\wedge}:=\varprojlim \A^0/\A^0\m_0^k$,
$\m_0:=\m\cap\v_0$ and $\A'^{0\wedge}$ is defined analogously. Again, we have  $\Phi^0(\A^{0\wedge}\m_0)=\A'^{0\wedge}\m_0$.

Now we will discuss a certain category of $\A^\wedge$-modules. Namely, we consider
topological $\A^\wedge$-modules $M$ equipped with discrete topology. This means that any
vector in $M$ is annihilated by some neighborhood of zero, i.e., by some  $\A^\wedge\m^k$.
So this category is the same as the category of $\A$-modules, where $\m$ acts by locally nilpotent
endomorphisms. This category is denoted by $\widetilde{\Wh}(A,\m)$. Also we need its subcategory
$\widetilde{\Wh}^{\t_0}(A,\m)$ consisting of all modules with diagonalizable action of $\t_0$.

In particular, we have the following straightforward corollary of Proposition \ref{Prop:7.2}.

\begin{Cor}\label{Cor:7.3}
Preserve the assumptions of Proposition \ref{Prop:7.2}. Then there are equivalences $\Phi_*:\widetilde{\Wh}(\A,\m)\rightarrow\widetilde{\Wh}(\A',\m), \widetilde{\Wh}^{\t_0}(\A,\m)\rightarrow\widetilde{\Wh}^{\t_0}(\A',\m),
\Phi^0_*:\widetilde{\Wh}(\A^0, \m_0)\rightarrow \widetilde{\Wh}(\A'^0,\m_0)$ induced by $\Phi$ and $\Phi^0$. Moreover,
\begin{equation}\label{eq:7.6}\Phi_*(M^{\m})=\Phi_*(M)^{\m}\end{equation} for any $\A$-module $M$ (with locally nilpotent
action of $\m$).
\end{Cor}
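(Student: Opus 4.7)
The plan is to reduce everything to Proposition \ref{Prop:7.2}: once we know $\Phi:\A^\wedge\to\A'^\wedge$ is a topological algebra isomorphism carrying $\A^\wedge\m$ to $\A'^\wedge\m$, the categorical statements are essentially a bookkeeping exercise about pull-back along an isomorphism.

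The first step is to record the (previously stated) identification of $\widetilde{\Wh}(\A,\m)$ with the category of discrete topological $\A^\wedge$-modules, i.e.\ modules in which every vector is killed by some $\A^\wedge\m^{k}$. The analogous identification holds for $\widetilde{\Wh}(\A',\m)$ with $\A'^\wedge$-modules and for $\widetilde{\Wh}(\A^0,\m_0),\widetilde{\Wh}(\A'^0,\m_0)$ with discrete $\A^{0\wedge}$- and $\A'^{0\wedge}$-modules. Given this, define $\Phi_*$ as pull-back along $\Phi^{-1}$: if $M$ is a discrete $\A^\wedge$-module, equip the underlying vector space with the $\A'^\wedge$-action $a'\cdot m:=\Phi^{-1}(a')m$. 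Since $\Phi(\A^\wedge\m)=\A'^\wedge\m$, a vector $m$ is annihilated by $(\A'^\wedge\m)^k$ in $\Phi_*(M)$ iff it is annihilated by $(\A^\wedge\m)^k$ in $M$; hence discreteness is preserved, and $\Phi_*$ is an equivalence with quasi-inverse $(\Phi^{-1})_*$.

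Next I would verify the $\t_0$-refinement. Since $\Phi$ is $T_0$-equivariant and the embedding $\t_0\hookrightarrow\A^\wedge$ matches the embedding $\t_0\hookrightarrow\A'^\wedge$ under $\Phi$ (this is built into the hypotheses on both star-products, via the condition that the $T_0$-action coincides with the adjoint $\t_0$-action), the $\t_0$-weight decomposition of an $\A$-module is preserved by $\Phi_*$. Thus $\Phi_*$ sends $\widetilde{\Wh}^{\t_0}(\A,\m)$ to $\widetilde{\Wh}^{\t_0}(\A',\m)$ and is an equivalence there as well. The construction of $\Phi^0_*$ proceeds identically from the induced isomorphism $\Phi^0:\A^{0\wedge}\to\A'^{0\wedge}$ obtained at the end of Proposition \ref{Prop:7.2}, using $\Phi^0(\A^{0\wedge}\m_0)=\A'^{0\wedge}\m_0$.

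Finally, for (\ref{eq:7.6}), observe that for a discrete $\A$-module $M$ the subspace of $\m$-invariants admits the intrinsic description
\[
M^{\m}=\{m\in M\mid (\A^\wedge\m)m=0\},
\]
because any element of $\A^\wedge\m$ is a (convergent) sum whose action on $m$ is a finite sum, and conversely the action of $\m\subset\A$ already generates $\A^\wedge\m$ after left multiplication. The same description gives $\Phi_*(M)^{\m}=\{m\in M\mid(\A'^\wedge\m)m=0\}$, where the action is via $\Phi^{-1}$. Now the identity $\Phi^{-1}(\A'^\wedge\m)=\A^\wedge\m$ shows the two annihilation conditions coincide, hence $\Phi_*(M)^{\m}=M^{\m}=\Phi_*(M^{\m})$ as required.

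The only step with any real content is the correct interpretation of $\Phi_*$ and the verification that $\m$-invariants admit the intrinsic description in terms of $\A^\wedge\m$-annihilation; both are immediate once discreteness of the action is invoked, so I expect no substantive obstacle beyond stating the identifications carefully.
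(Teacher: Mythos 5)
Your argument is correct and is exactly the (unwritten) argument the paper intends: the paper offers no proof of this corollary, calling it a straightforward consequence of Proposition \ref{Prop:7.2}, and your reduction --- identify $\widetilde{\Wh}(\A,\m)$ with discrete $\A^\wedge$-modules, transport the action along $\Phi^{-1}$, and use $\Phi(\A^\wedge\m)=\A'^\wedge\m$ together with the intrinsic description $M^{\m}=\{m\mid (\A^\wedge\m)m=0\}$ to get (\ref{eq:7.6}) --- is precisely that bookkeeping. The one imprecision is your justification of the $\t_0$-statement: the hypotheses of Proposition \ref{Prop:7.2} only give $T_0$-equivariance of $\Phi$, which forces $\Phi^{-1}(\iota_{\A'}(\xi))-\iota_{\A}(\xi)$ to be \emph{central} in $\A^\wedge$ rather than zero, and a non-scalar central discrepancy could a priori destroy diagonalizability; in the situation where the corollary is applied the embeddings are genuinely intertwined, but this comes from assertion (3) of Proposition \ref{Prop:1} (the quantum comoment map condition), as the paper itself notes in Remark \ref{Rem_two_isomorphisms}, not merely from the condition that the $T_0$-action differentiates to $\ad\t_0$.
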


Let us specify now $\A,\A',\m$.

The torus $T_0$ we are going to consider is the same as in Section \ref{SECTION_Ocat}.
Fix  $m\in \mathbb{N}, m>2+2d$, where $d$ denotes the maximal eigenvalue of $\ad h$ in $\g$.
Consider the diagonal embedding $\K^\times \hookrightarrow\K^\times\times
T_0$, whose differential is given by $d_1(1)=(1,-m\theta)$.

Consider the vector space $\vf:=\{\xi-\langle
\chi,\xi\rangle, \xi\in\g\}$. The group $\K^\times\times T_0$
naturally acts on this space (we consider the Kazhdan action of $\K^\times$ and the action of
$T_0$ coming from the adjoint action of $T_0$ on $\g$).
So $\vf$ is graded,
$\vf=\bigoplus_{i\in \Z}\vf(i)$, the grading is induced by the
diagonal action of $\K^\times$, i.e., $\vf(i):=\{\xi\in\g| (h-m\theta)\xi= (i-2)\xi\}$.
Put $\A:=\U$. As we explained at the end of Section \ref{SECTION_Walg}, the
product in $\U$ has the required form. Set
$\m:=\widetilde{\m}_\chi$. This subspace satisfies the requirements above (that is,
$\vf_{>0}\subset \m\subset \vf_{\geqslant 0}$ and the eigenvalues of $v_1,\ldots,v_n$ are decreasing).  Furthermore,
$\v_0=\l$ and $\A^0=U(\l)$.

  Note that  $\vf:=\z_\g(e)\oplus V$. So we can set
$\A':=\W_V(\Walg)$, where we write $\W_V(\Walg)$ for $\W_V\otimes \Walg$.  From the choice of $m$
it follows that
 all $\theta$-weight spaces of $\z_\g(e)$ with
positive weights lie in $\bigoplus_{i<0}\vf(i)$. Moreover, note that $\widetilde{\m}\cap
V$ is a lagrangian subspace in $V$. Finally, we remark that $\A'_{\geqslant 0}/\A'^+_{\geqslant 0}$ is
naturally identified with $\W_{V_0}(\Walg^0)$, where $V_0:=V\cap \v_0$. Note that $\m_0$ is contained in $ V_0$
and is a lagrangian subspace there.

\begin{Lem}\label{Lem:7.4}
There is an isomorphism $\Phi:\U^\heartsuit\rightarrow \W_V(\Walg)^{\heartsuit}$ satisfying the
conditions of Proposition \ref{Prop:7.2}. For the extension $\Phi:\U^\wedge\rightarrow
\W_V(\Walg)^\wedge$ we have
\begin{equation}\label{eq:4.1}\Phi^{-1}(\W_V(\I)^\wedge)\cap\U=\I^\dagger,\end{equation} where $\I=\Ann_\Walg(M)$ for $M\in\widetilde{\Ocat}(\theta)$, and $\W_V(\I)^\wedge$ denotes the closure of $\W_V(\I):=\W_V\otimes \I$
in $\W_V(\Walg)^\wedge$.
\end{Lem}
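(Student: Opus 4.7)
The plan is to obtain $\Phi$ by specializing the formal isomorphism $\Phi_\hbar$ of Proposition \ref{Prop:1} at $\hbar = 1$ and restricting to $\heartsuit$-subalgebras, then to verify the hypotheses of Proposition \ref{Prop:7.2} from the listed properties of $\Phi_\hbar$.

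Specifically, $\Phi_\hbar$ provides a $Q \times \K^\times$-equivariant isomorphism of topological $\K[[\hbar]]$-algebras between $\K[\g^*]^\wedge_\chi[[\hbar]]$ and $\K[S]^\wedge_\chi[[\hbar]] \widehat{\otimes}_{\K[[\hbar]]} \K[V^*]^\wedge_0[[\hbar]]$. Setting $\hbar = 1$ (legitimate on the relevant graded pieces by the only-even-powers condition together with homogeneity of the star-products) identifies these with completions of $\U$ and of $\W_V(\Walg)$. I would then observe that $\U^\heartsuit$ and $\W_V(\Walg)^\heartsuit$ embed into the respective completions as the sets of Kazhdan-graded elements with degrees bounded from above, and that homogeneity of $\Phi_\hbar$ forces the specialization $\Phi$ to restrict to an isomorphism between them.

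To verify the hypotheses of Proposition \ref{Prop:7.2}: $T_0$-equivariance follows from $T_0 \subset T \subset Q$; homogeneity with respect to the Kazhdan $\K^\times$-action is inherited from the $\K^\times$-equivariance of $\Phi_\hbar$ under the diagonal embedding $\K^\times \hookrightarrow \K^\times \times T_0$ introduced in Section \ref{SECTION_proof}; and the key estimate $\Phi(v_i) - v_i \in \F_{d_i-2}\A + (\F_{d_i}\A \cap \vf^2 A)$ is a direct consequence of condition (2) of Proposition \ref{Prop:1}, which identifies the top-degree linear part of $\Phi(v_i)$ with $v_i$ itself via the natural map $\varphi: \z_\g(e) \oplus V \to \g$. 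The remaining requirement of a $T_0$-stable lagrangian in $V(1)$ is supplied by $\widetilde{\m} \cap V$, as noted just before the statement.

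Finally, for (\ref{eq:4.1}) I would unwind the definition of the dagger operation from Section \ref{SECTION_Walg}: $\I^\dagger = \J_\hbar/(\hbar-1)$ with $\J_\hbar = \Phi_\hbar^{-1}(\I^\wedge_\hbar \widehat{\otimes}_{\K[[\hbar]]} \K[V^*]^\wedge_0[[\hbar]]) \cap \K[\g^*][\hbar]$. Specializing at $\hbar = 1$ should convert the right-hand side into $\Phi^{-1}(\W_V(\I)^\wedge) \cap \U$. The main technical obstacle is precisely verifying that this specialization is compatible with the various closure operations, since $\I^\wedge_\hbar$ is taken in the Premet-type completion at the point $\chi$, while $\W_V(\I)^\wedge$ is taken in the Whittaker-type completion $\W_V(\Walg)^\wedge$ with respect to powers of $\m$. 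This should follow from $\hbar$-saturation of $\I_\hbar$ together with the fact that both completions arise from the same underlying formal object after specialization, but it requires careful bookkeeping within the topological-algebra framework of \cite{Wquant}.
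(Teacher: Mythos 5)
Your construction of $\Phi$ is exactly the paper's: identify $\U^\heartsuit$ and $\W_V(\Walg)^\heartsuit$ with the quotients of the $\K^\times$-finite parts of $\K[\g^*]^\wedge_\chi[[\hbar]]$ and $\K[S]^\wedge_\chi[[\hbar]]\widehat{\otimes}_{\K[[\hbar]]}\K[V^*]^\wedge_0[[\hbar]]$ by $\hbar-1$, and read off the hypotheses of Proposition \ref{Prop:7.2} from the properties (1)--(3) of $\Phi_\hbar$. That half is fine.

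The gap is in your treatment of (\ref{eq:4.1}), where you defer precisely the hard point to ``careful bookkeeping.'' The two completions are genuinely different: $\I^\wedge_\hbar$ is an $\hbar$-adic/$\chi$-adic closure, while $\W_V(\I)^\wedge$ is the closure in the $\m$-adic topology of $\W_V(\Walg)^\wedge$, and there is no formal reason why passing between them commutes with intersecting with $\U$. In particular, your strategy (``both completions arise from the same underlying formal object after specialization'') makes no use of the hypothesis that $\I=\Ann_\Walg(M)$ for $M\in\widetilde{\Ocat}(\theta)$; if it worked it would prove (\ref{eq:4.1}) for an arbitrary two-sided ideal, which the paper explicitly flags as unclear in the remark following the lemma. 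The paper's actual argument reduces (\ref{eq:4.1}) to showing that both $\W_V(\I)^\wedge\cap\W_V(\Walg)^\heartsuit$ and the specialization of $\overline{\J}_\hbar$ equal $\W_V(\Walg)^\heartsuit\I$. For the Rees-algebra side this uses that $\overline{\J}_\hbar$ is generated by its intersection with $\K[[S,\hbar]]$ and the positivity of the Kazhdan grading on $\K[S]$ to descend to $\K[S][\hbar]$. For the other side one needs two statements your plan does not supply: (i) every two-sided ideal of $\W_V(\Walg)^\heartsuit$ is generated by its intersection with $\Walg$ (again via a Rees-algebra/positivity argument), and (ii) $\W_V(\I)^\wedge\cap\Walg=\I$, equivalently $\I^\wedge\cap\Walg=\I$ where $\I^\wedge$ is the closure of $\I$ in $\Walg^\wedge$. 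Point (ii) is exactly where the hypothesis on $\I$ enters: since $M\in\widetilde{\Ocat}(\theta)$, the completion $\Walg^\wedge$ acts on $M$, so $\I^\wedge\subset\Ann_{\Walg^\wedge}(M)$ and hence $\I^\wedge\cap\Walg\subset\I$. Without this module-theoretic step the closure could a priori pick up extra elements of $\Walg$, and your specialization argument has no mechanism to exclude that.
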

\begin{proof}
The algebras $\A^\heartsuit,
\A'^\heartsuit$ are naturally identified with the quotients
of $\K^\times$-finite parts of $$\K[\g^*]^\wedge_\chi[[\hbar]],
\K[S]^\wedge_\chi[[\hbar]]\widehat{\otimes}_{\K[[\hbar]]}\K[V^*]^\wedge_0[[\hbar]]$$
by $\hbar-1$. So the isomorphism $\Phi_\hbar$ from Proposition \ref{Prop:1} induces a $T_0$-equivariant isomorphism
$\Phi:\A^\heartsuit\rightarrow \A'^\heartsuit$. From the properties of $\Phi_\hbar$ indicated
in Proposition \ref{Prop:1} it follows that $\Phi$ has the required properties.

By \cite{Wquant}, Lemma 3.2.5, there is a natural identification of
$\K[S][\hbar]$ with the Rees algebra $R_\hbar(\Walg)$. So we can consider $R_\hbar(\I)$ as
an ideal  in the quantum algebra $\K[S][\hbar]$. Consider the closure
$\overline{\I}_\hbar$ of $R_\hbar(\I)$ in
$\K[S]^\wedge_\chi[[\hbar]]$. The ideal
$$\overline{\J}_\hbar:=\K[[V^*,\hbar]]\widehat{\otimes}_{\K[[\hbar]]}\overline{\I}_\hbar\subset
\K[[\v^*,\hbar]]$$ is closed, $\K^\times$-stable and
$\hbar$-saturated. By \cite{Wquant}, Proposition 3.2.2, there is a unique
ideal $\I^\ddagger\subset \A$ such that $$R_\hbar(\I^\ddagger)=
R_\hbar(\A)\cap
\Phi_\hbar^{-1}(\K[[V^*,\hbar]]\widehat{\otimes}_{\K[[\hbar]]}\overline{\I}_\hbar).$$
Proposition 3.4.1 from \cite{Wquant} asserts that $\I^\dagger=\I^\ddagger$. So we need to prove that
\begin{equation}\label{eq:4.1.1} \Phi^{-1}(\W_V(\I)^\wedge)\cap\A=\I^\ddagger.\end{equation}

We will prove that \begin{equation}\label{eq:4.1.2}\W_V(\I)^\wedge\cap \W_V(\Walg)^\heartsuit=\overline{\J}_\hbar^{fin}/(\hbar-1),\end{equation}
where $\overline{\J}_\hbar^{fin}$ denotes the $\K^\times$-finite part of $\overline{\J}_\hbar$.
This will imply (\ref{eq:4.1.1}).

To prove (\ref{eq:4.1.2}) we will show that both sides equal $\W_V(\Walg)^\heartsuit\I$. First, let us check this for
the r.h.s. As we checked in \cite{Wquant}, Lemma 3.4.3, $\overline{\J}_\hbar$ is generated by its intersection with $\K[[S,\hbar]]$. Recall that $\overline{\J}_\hbar$ is $\K^\times$-stable. But the $\K^\times$-action we consider
 differ from the Kazhdan one by an action by inner automorphisms, so $\overline{\J}_\hbar\cap
 \K[[S,\hbar]]$ is stable w.r.t the Kazhdan action.  Since the Kazhdan grading on $\K[S]$ is positive,
we see that $\overline{\J}_\hbar$ (and so also $\overline{\J}_\hbar^{fin}$)
is generated by its intersection with $\K[S][\hbar]$. It follows that the r.h.s of (\ref{eq:4.1.2}) is generated
(as an ideal in $\W_V(\Walg)^\heartsuit$) by $\I$.

The proof that the l.h.s. coincides with $\W_V(\Walg)^\heartsuit\I$ boils down to the following two claims:

{\bf Claim 1.} Any ideal in $\W_V(\Walg)^\heartsuit$ is generated by its intersection with
$\Walg$.

{\bf Claim 2.} $\W_V(\I)^\wedge\cap\Walg=\I$ (here the condition that $\I$ is the annihilator
of a module from $\widetilde{\Ocat}(\theta)$ is essential).

Let us prove Claim 1. Let $\J$ be a two-sided ideal in $\W_V(\Walg)^\heartsuit$. Consider the corresponding
ideal $R_\hbar(\J)\subset R_\hbar(\W_V(\Walg)^\heartsuit)=\K[[\vf^*,\hbar]]_{\K^\times-fin}$ and its closure
$\overline{R}_\hbar(\J)\subset \K[[\vf^*,\hbar]]$. Then we can repeat the argument above and obtain
that $\overline{R}_\hbar(\J)$ is generated by its intersection with $\K[S][\hbar]$. This yields
$\J=\W_V(\Walg)^\heartsuit (\J\cap\Walg)$.

Proceed to the proof of Claim 2. We can form the algebras $\Walg^\wedge$ and $\W_V^\wedge$ from $\Walg$ and $\W_V$ using the general construction
explained above, so that $\W_V(\Walg)^\wedge$ is decomposed into the completed tensor product of $\W_V^\wedge$
and $\Walg^\wedge$. The ideal $\W_V(\I)^\wedge$ coincides with $\W_V^\wedge\widehat{\otimes}\I^\wedge$, where
$\I^\wedge$ is the closure of $\I$ in $\Walg^\wedge$. So it remains to check that $\I^\wedge\cap\Walg=\I$.
Recall that $\I=\Ann_\Walg(M)$ for some module $M$ from $\widetilde{\Ocat}(\theta)$. Then $\Walg^\wedge$
acts on $M$, and $\I^\wedge\subset \Ann_{\Walg^\wedge}(M)$. It follows that $\I^\wedge\cap \Walg\subset \I$.
The inverse inclusion is obvious.
\end{proof}

\begin{Rem}
It is not clear at the moment whether (\ref{eq:4.1}) holds without the restriction on $\I$. It looks plausible that any ideal is the annihilator of a module from
$\widetilde{\Ocat}(\theta)$. On the other hand, it may happen that the condition
$\I^\wedge\cap \Walg=\I$ holds for any two-sided ideal $\I$, even if $\I$ is not the annihilator
of a module from $\widetilde{\Ocat}(\theta)$.
\end{Rem}

An isomorphism $\Phi:U(\g)^\heartsuit\rightarrow \W_V(\Walg)^\heartsuit$ from the proof of Lemma \ref{Lem:7.4}
gives rise to  an isomorphism
$\Phi^0:U(\l)^\wedge\rightarrow\W_{V_0}(\Walg^0)^\wedge$ mapping
  $U(\l)^\wedge \m_0$ to $\W_{V_0}(\Walg^0)^\wedge \m_0$. This provides an isomorphism
\begin{equation}\label{eq:Psi}\Psi:\underline{\Walg}=(U(\l)^\wedge/U(\l)^\wedge\m_0)^{\m_0}\rightarrow  \W_{V_0}(\Walg^0)^\wedge/(\W_{V_0}(\Walg^0)^\wedge\m_0)^{\m_0}=\Walg^0 \end{equation}
we need in Theorem \ref{Thm_main}.

Before proceeding further let us make a remark on the isomorphism $\Psi$. We will use this remark in
\cite{Miura}.

\begin{Rem}\label{Rem_two_isomorphisms}
Let us discuss a relation between $\Psi$ and the embeddings $\t_0\hookrightarrow \Walg^0,
\underline{\Walg}$. It turns out that $\Psi$ does not intertwine them but   induces a shift
on $\t_0$.

It follows from assertion (iii) of Proposition \ref{Prop:1} that the isomorphism $\Phi:\U^\heartsuit\rightarrow \W_V(\Walg)^\heartsuit$ intertwines the embeddings of $\t_0$. Let $\iota_\g,\iota_{\Walg}, \iota_V$ denote the
embeddings of $\t_0$ to $\g,\Walg,\W_V$, respectively. Of course, $\iota_\g(\xi)$ is nothing else but $\xi$ itself,
and $\Phi(\iota_\g(\xi))=\iota_\Walg(\xi)+\iota_V(\xi)$.
Let us describe $\iota_V$. Let $\chi_1,\ldots,\chi_m$ denote all characters (with multiplicities) of the
representation of $\t_0$ in the lagrangian subspace $\m\cap V=(\n_+\cap V)\oplus\m_0\subset V$. Let $u_1^+,\ldots,u_m^+$ be the corresponding eigenvectors and let $u_1^-,\ldots,u_m^-\in V$ be such that $\omega_V(u_i^-,u_j^+)=\delta_{ij}$ so that
$\xi.u_i^-=-\langle\chi_i,\xi\rangle u_i^-$. Note that $\langle\chi_i,\theta\rangle\geqslant 0$ for all
$i$ and $\langle\chi_i,\theta\rangle=0$ iff $u_i^+\in \m_0$. Now $$\iota_V(\xi)=\frac{1}{2}\sum_{i=1}^m\langle\chi_i,\xi\rangle(u_i^+u_i^-+u_i^-u_i^+)=
\sum_{i=1}^m \langle\chi_i,\xi\rangle u_i^-u_i^+-\frac{1}{2}\langle \sum_{i=1}^m\chi_i,\xi\rangle.$$
Let $\pi:\Walg_{\geqslant 0}\twoheadrightarrow \Walg^0 $ be the natural projection.
Recall that $\W_V(\Walg)^{\heartsuit 0}$ is naturally identified with
$\W_{V_0}(\Walg^0)$. The image of $\iota_\Walg(\xi)+\iota_V(\xi)$ in $\W_{V_0}(\Walg^0)$ coincides
with
\begin{equation}\label{eq:9.1}\pi(\iota_{\Walg}(\xi))+\sum_{i,\langle\chi_i,\theta\rangle=0} u_i^-u_i^+-\frac{1}{2}\langle \sum_{i=1}^m\chi_i,\xi\rangle.\end{equation}
Now let $\iota_{\l},\iota_{\underline{\Walg}},\iota_{V_0}$ denote the embeddings of $\t_0$ into $\l,\underline{\Walg},\W_{V_0}$, respectively, and
let $\Phi^0$ be an isomorphism $U(\l)^\wedge\rightarrow \W_{V_0}(\underline{\Walg})^\wedge$ similar
to that from \cite{Wquant}, Theorem 1.2.1. Then, by \cite{HC}, Theorem 2.3.1 and Remark 2.3.2, we have $\Phi_\l(\iota_\l(\xi))=
\iota_{\underline{\Walg}}(\xi)+\iota_{\W_{V_0}}(\xi)$. So
\begin{equation}\label{eq:9.2}
\Phi_\l(\iota_\l(\xi))=\iota_{\underline{\Walg}}(\xi)+\sum_{i,\langle\chi_i,\theta\rangle=0} u_i^-u_i^+-\frac{1}{2}\langle \sum_{i,\langle\chi_i,\theta\rangle=0}\chi_i,\xi\rangle.
\end{equation}

Since $\Psi$ is given by (\ref{eq:Psi}),  from (\ref{eq:9.1}),(\ref{eq:9.2}) we see that
$\Psi^{-1}$ maps $\pi(\iota_{\Walg}(\xi))$ to $$\iota_{\underline{\Walg}}(\xi)+\frac{1}{2}\langle\sum_{i:\langle\chi_i,\theta\rangle>0} \chi_i,\xi\rangle$$

In \cite{BGK}, Subsection 4.1, Brundan, Goodwin and Kleshchev also constructed an isomorphism $\Walg^0\rightarrow\underline{\Walg}$
(in the case when $\t=\t_0$). Their isomorphism sends $\pi(\iota_{\Walg}(\xi))$ to $\iota_{\underline{\Walg}}(\xi)-\langle\delta,
\xi\rangle$, where $\delta$ is defined as follows. Pick a Cartan subalgebra  $\h\subset\g$ containing $\t$ and $h$.
Let $\Delta_-$ denote the set of all roots $\alpha$ with $\langle\alpha,\theta\rangle<0$. Then
$$\delta=\frac{1}{2}\sum_{\alpha\in \Delta_-,\langle\alpha,h\rangle=-1}\alpha+\sum_{\alpha\in \Delta_-,\langle\alpha,h\rangle\leqslant -2}\alpha.$$

Let us check that $\delta|_{\t_0}=-\frac{1}{2}\sum_{i,\langle\chi_i,\theta\rangle>0}\chi_i$.

Since $e,h$ are $\t_0$-invariant, the representation theory of $\sl_2$ implies

\begin{equation*}\begin{split}&\delta|_{\t_0}=\frac{1}{2}\left(\sum_{\alpha\in \Delta_-,\langle\alpha,h\rangle=-1}\alpha|_{\t_0}\right)+
\frac{1}{2}\left(\sum_{\alpha\in \Delta_-,\langle\alpha,h\rangle\leqslant -2}\alpha|_{\t_0}+
\sum_{\alpha\in \Delta_-,\langle\alpha,h\rangle\geqslant 2}\alpha|_{\t_0}\right)=\\
&\frac{1}{2}\sum_{\alpha\in \Delta_-}\alpha|_{\t_0}-\frac{1}{2}\sum_{\alpha\in \Delta_-,\langle\alpha,\theta\rangle=0,1}\alpha|_{\t_0}\end{split}\end{equation*}
The last expression is the sum of weights of $\t_0$ in $\n_-\cap V=[\n_-,f]$.
Since $\n_-\cap V$ and $\n_+\cap V$ are dual $\t_0$-modules, we are done.
\end{Rem}

Let us complete the proof of Theorem \ref{Thm_main}. Note that $\widetilde{\Wh}(e,\theta)=\widetilde{\Wh}(\A,\m), \widetilde{\Wh}^{\t_0}(e,\theta)
=\widetilde{\Wh}^{\t_0}(\A,\m)$.
On the other hand, let us construct an equivalence  $\widetilde{\Wh}(\A',\m)\rightarrow\widetilde{\Ocat}(\theta)$.
This functor is given by $\KF':M\mapsto M^{\m\cap V}, M\in \widetilde{\Wh}(\A',\m)$. A quasiinverse functor is given by $N\mapsto \K[\m\cap V]\otimes N, N\in \widetilde{\Ocat}(\theta)$. The claim that these two functors are quasiinverse
follows from the representation theory of Heisenberg Lie algebras, see
the proof of Proposition 3.3.4 in \cite{Wquant}. It follows directly from the construction of
$\KF'$ that
\begin{equation}\label{eq:7.8}
M^{\m}=(\KF'(M))^{\Walg_{>0}}.
\end{equation}

Now we set $\KF:=\KF'\circ \Phi_*$. Let us check that $\K$ has the  required properties.  The equality $\Ann_{\Walg}(\KF(M))^\dagger=\Ann_{\U}(M)$ stems from Lemma \ref{Lem:7.4}.
(\ref{eq:7.8}) and (\ref{eq:7.6})
imply assertion 3.
 Assertion 4 follows from assertion 3 and the adjointness of functors
mentioned in Sections \ref{SECTION_Ocat},\ref{SECTION_Milicic_Soergel}.
The second assertion of the theorem now  follows from Corollary \ref{Cor:7.3}.

\section{Applications}\label{SECTION_Application}
Let us discuss some applications of Theorem \ref{Thm_main}. In \cite{Miura}
we will apply it to the study of one-dimensional representations
of W-algebras. More precisely,  we will give a
criterium for $\dim L^\theta(V)<\infty$  in terms of the
annihilator of $\Psi_*(V)\in \underline{\Walg}$-$\Mod$. In particular, this criterium
will  prove of Conjecture 5.2 from \cite{BGK}.

Then, under some conditions
on $e$, we will get a criterium (in terms of $V$) for a finite dimensional module $L^\theta(V)$
to have dimension 1. More precisely, we will check that whenever $\q$ is semisimple,
the following conditions are equivalent provided $L^\theta(V)$ is finite dimensional:
\begin{itemize}
\item $\dim L^\theta(V)=1$.
\item $\dim V=1$ and $\t$ acts by 0 on $V$ (considered as a $\Walg^0$-module).
\end{itemize}

Since the criterium for $L^\theta(V)$ to be finite dimensional is stated, in a sense, in terms of
$\underline{\Walg}$, we need Remark \ref{Rem_two_isomorphisms}.
The condition that $\q$ is semisimple is fulfilled for all so called rigid nilpotent elements
in exceptional Lie algebras. Together with results of Premet, \cite{Premet4}, on "parabolic
induction" for one-dimensional representations of W-algebras (also reproved in \cite{Miura})
this should allow to complete the proof of Premet's conjecture, \cite{Premet2}, that any
W-algebra has a one-dimensional representation.

Another application, as we learned from Jonathan Brundan, is to finite dimensional irreducible representations of Yangians. For type A, Brundan and Kleshchev identified W-algebras with quotients (truncations) of shifted
Yangians. The latter generalize the usual Yangians introduced by Drinfeld, see \cite{BK1}. Using this
presentation of W-algebras, they classified, \cite{BK2}, all their irreducible finite dimensional representations
(Theorem 7.9) and also all irreducible finite dimensional representations of shifted
Yangians (Corollary 7.10), generalizing results of Drinfeld, \cite{Drinfeld}, on the usual Yangians.
The classification for $W$-algebras is made in terms of some Young diagrams.
On the other hand, using the results announced in the previous paragraph, it is possible to describe
irreducible $\underline{\Walg}$-modules $V$ such that $\dim L^\theta(\Psi_*^{-1}(V))<\infty$
using the  classical results of Joseph, \cite{Joseph}, on combinatorial description of primitive ideals
in $U(\sl_n)$ with given associated variety (which is again stated in terms of some Young diagrams).
In this way one can recover Brundan-Kleshchev classification for W-algebras, see Section 5.2 of
\cite{BGK}. Then, perhaps after some work, one can recover the classification for shifted
Yangians.

In the other classical types Brown identified the W-algebras for  rectangular nilpotent
  elements  $e$ with the truncations of so called  twisted Yangians, see \cite{Brown1}.
  Recall that a nilpotent element in a classical Lie algebra is called rectangular if  all
  the numbers in the corresponding partition are the same. Such an element is always of
  principal Levi type. In \cite{Brown2} Brown used Molev's classification, \cite{Molev}, of irreducible finite dimensional representations for twisted Yangians to classify those for W-algebras (in the rectangular case). The answer
  is given in purely combinatorial terms. On the other hand, \cite{BGK}, Conjecture 5.2, together with results of Barbash and Vogan, \cite{BV}, should make it possible to recover Brown's classification.

{\Small Department of Mathematics, Massachusetts Institute of
Technology, 77 Massachusetts Avenue, Cambridge, MA 02139, USA.

\noindent E-mail address: ivanlosev@math.mit.edu}

\end{document}